\renewcommand\thesubsection{\thesection.\Alph{subsection}}
\newcommand{\sbnal}{0.3em}   
\titleformat{\subsection}[runin]{\bfseries}{\thesubsection}{\sbnal}{}[]
\theoremstyle{theorem}
\newtheorem{mainthm}{Theorem}
\newtheorem*{conj}{Conjecture}
\newtheorem{thm}{Theorem}[section]
\newtheorem{prop}[thm]{Proposition}
\newtheorem{lem}[thm]{Lemma}
\newtheorem{cor}[thm]{Corollary}
\theoremstyle{definition}
\newtheorem{defn}[thm]{Definition}
\newtheorem{rem}[thm]{Remark}
\numberwithin{equation}{section}
\newcommand{\GL}{\operatorname{GL}}
\newcommand{\GU}{\operatorname{GU}}
\newcommand{\U}{\operatorname{U}}
\newcommand{\SL}{\operatorname{SL}}
\newcommand{\SU}{\operatorname{SU}}
\newcommand{\PSL}{\operatorname{PSL}}
\newcommand{\PSU}{\operatorname{PSU}}
\newcommand{\Sp}{\operatorname{Sp}}
\newcommand{\PSp}{\operatorname{PSp}}
\newcommand{\Aut}{\operatorname{Aut}}
\newcommand{\Ind}{\operatorname{Ind}}
\newcommand{\Res}{\operatorname{Res}}
\newcommand{\Irr}{\operatorname{Irr}}
\newcommand{\bl}{\operatorname{bl}}
\newcommand{\IBr}{\operatorname{IBr}}
\newcommand{\dz}{\operatorname{dz}}
\newcommand{\Alp}{\operatorname{Alp}}
\newcommand{\rO}{\operatorname{O}}
\newcommand{\J}{\operatorname{J}}
\newcommand{\Lin}{\operatorname{Lin}}
\newcommand{\group}[1]{\langle#1\rangle}
\newcommand{\F}{\mathbb{F}}
\newcommand{\barF}{\overline{\mathbb{F}}}
\newcommand{\tG}{\tilde{G}}
\newcommand{\tH}{\tilde{H}}
\newcommand{\tK}{\tilde{K}}
\newcommand{\tbG}{\tilde{\mathbf{G}}}
\newcommand{\hG}{\hat{G}}
\newcommand{\tbL}{\tilde{\mathbf{L}}}
\newcommand{\tR}{\tilde{R}}
\newcommand{\tC}{\tilde{C}}
\newcommand{\tN}{\tilde{N}}
\newcommand{\tB}{\tilde{B}}
\newcommand{\eps}{\eta}
\newcommand{\cF}{\mathcal{F}}
\newcommand{\fZ}{\mathfrak{Z}}
\newcommand{\fS}{\mathfrak{S}}
\newcommand{\ttheta}{\tilde{\theta}}
\newcommand{\tDelta}{\tilde{\Delta}}
\newcommand{\tvarphi}{\tilde{\varphi}}
\newcommand{\tchi}{\tilde{\chi}}
\newcommand{\tpsi}{\tilde{\psi}}
\newcommand{\cW}{\mathrm{Alp}}
\newcommand{\hz}{\hat{z}}
\newcommand{\hchi}{\hat{\chi}}
\newcommand{\tcB}{\tilde{\mathcal{B}}}
\newcommand{\tOmega}{\tilde{\Omega}}
\let\Ga=\Gamma
\let\al=\alpha
\let\la=\lambda
\begin{document}

\title{On the inductive blockwise Alperin weight condition for type $\mathsf A$
\footnote{Supported by the NSFC (No.~11631001, No.~11901028, No.~11901478).}}

\author{Zhicheng Feng
	\footnote{School of Mathematics and Physics, University of Science and Technology Beijing, Beijing 100083, China. E-mail address: zfeng@pku.edu.cn} \quad Conghui Li
	\footnote{School of Mathematics, Southwest Jiaotong University, Chengdu 611756, China. E-mail address: liconghui@swjtu.edu.cn}
	\quad Jiping Zhang
	\footnote{School of Mathematical Sciences, Peking University, Beijing 100871, China. E-mail address: jzhang@pku.edu.cn}
}

\maketitle

\begin{abstract}
In this paper we prove the blockwise Alperin weight conjecture for finite special linear and unitary groups, for finite groups with abelian Sylow~$3$-subgroups,
 and verify the inductive blockwise Alperin weight condition for certain cases of groups of type $\mathsf A$.
We also give a classification for the~2-blocks of special linear and unitary groups.
\newline \emph{2020 Mathematics Subject Classification:} 20C20, 20C33.
\newline \emph{Keyword:}  blocks, Alperin weight conjecture, inductive blockwise Alperin weight condition,  special linear and unitary groups.
\end{abstract}



\section{Introduction}\label{sect:intro}

The famous Alperin weight conjecture relates for a prime $\ell$ information about irreducible $\ell$-Brauer characters of a finite group $G$ to properties of $\ell$-local subgroups of $G$. 
We call an \emph{$\ell$-weight} a pair $(R,\varphi)$,
where $R$ is an $\ell$-subgroup of $G$ and
$\varphi\in\Irr(N_G(R))$ with $R\subseteq\ker\varphi$ is of $\ell$-defect zero viewed as a character of $N_G(R)/R$.
When such a character $\varphi$ exists,
$R$ is necessarily a radical $\ell$-subgroup of $G$~(\emph{i.e.} $R=\mathrm O_\ell(N_G(R))$).
For an $\ell$-block $B$ of $G$,
a weight $(R,\varphi)$ is called a \emph{$B$-weight} if $\bl(\varphi)^G=B$,
where $\bl(\varphi)$ is the $\ell$-block of $N_G(R)$ containing $\varphi$.
We denote by $\cW_\ell(B)$ the set of all $G$-conjugacy classes of $B$-weights. 
In \cite[p.~371]{Al87}, J. L. Alperin states the following (blockwise Alperin weight) conjecture.

\begin{conj}[Alperin, 1986]
	\label{weiconj}
	Let $G$ be a finite group, $\ell$ a prime.
	If $B$ is an $\ell$-block of $G$, then $|\cW_\ell(B)|=|\IBr_\ell(B)|$.
\end{conj}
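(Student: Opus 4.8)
The plan is to deduce the conjecture by combining a reduction to finite simple groups with a direct verification for the groups relevant to this paper. By the theorem of Navarro and Tiep on the ordinary Alperin weight conjecture and its blockwise strengthening due to Sp\"ath, Conjecture~\ref{weiconj} holds for every finite group $G$ and every prime $\ell$ as soon as every finite non-abelian simple group satisfies the \emph{inductive blockwise Alperin weight} (iBAW) condition for $\ell$. So the first step is to invoke this reduction, which replaces the universal statement by a list of assertions about simple groups $S$: for each $\ell$-block $b$ of $S$ (and of its universal covering group) one must produce a bijection $\cW_\ell(b)\to\IBr_\ell(b)$ that is equivariant for $\Aut(S)$ and compatible with the block-theoretic Clifford theory relating $S$ to its covering group. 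For $\SL_n(q)$ and $\SU_n(q)$ themselves, which are not simple once $\gcd(n,q\mp 1)>1$, one instead argues directly, since $\cW_\ell(B)|=|\IBr_\ell(B)|$ for a non-simple group is not a formal consequence of iBAW.

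For groups of type $\mathsf A$ I would work inside the chain $\SL_n\le\GL_n$ (resp. $\SU_n\le\GU_n$) and descend to the simple quotient $\PSL_n$ (resp. $\PSU_n$) by central characters. The combinatorial heart is the classification, in this setting due to Alperin--Fong and An, of the radical $\ell$-subgroups $R\le\GL_n(q)$ up to conjugacy and the determination of the relative normalizers $N_{\GL}(R)/R$: these are assembled from basic subgroups attached to the $\ell$-adic structure of the natural module, and the defect-zero characters of $N_{\GL}(R)/R$ are governed by wreath-product combinatorics. Using an $e$-Harish-Chandra / Jordan-decomposition parametrization one matches, block by block, these characters with the unipotent-type building blocks of $\IBr_\ell(B)$, checking at each stage that the Brauer correspondent $\bl(\varphi)^G$ is the intended block; this last point rests on a classification of the $\ell$-blocks of $\SL_n(q)$ and $\SU_n(q)$, including the genuinely delicate case $\ell=2$, which is why such a classification is established in the paper. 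Passing from $\GL_n$ to $\SL_n$ and then to $\PSL_n$ is Clifford theory, using that the relevant quotients and subgroups are compatible with taking normalizers of radical subgroups.

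Feeding the simple groups into the reduction theorem requires that the bijection be $\Aut(S)$-equivariant and that it carry along the correct local data: one must exhibit suitable extensions (or projective representations with the right factor sets) of weight characters to their stabilizers in the relevant automorphism group, and verify Sp\"ath's ``$\ast$-conditions'' comparing the Clifford theory of $\varphi$ over $N_S(R)$ with that of the matching $\chi\in\IBr_\ell(b)$ over $S$. I expect this equivariance-and-extension package to be the main obstacle, and to be worst exactly for the non-generic primes: when $\ell\mid\gcd(n,q\mp 1)$ the outer automorphism group (diagonal, field and graph parts) is as large as possible, the blocks of $\SL_n$/$\SU_n$ acquire a ``twisted'' structure, and one must match the $\Aut(S)$-stabilizer of a weight with that of a Brauer character while choosing coherent extensions on both sides, which forces a case analysis with $\ell=2$ and small rank handled separately. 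For the statement about finite groups with abelian Sylow $3$-subgroups one only needs iBAW for the simple groups whose Sylow $3$-subgroups are abelian — these are preserved under passing to subquotients of such a group — and in type $\mathsf A$ this is a restricted family in which the weight combinatorics degenerates to a Brauer-tree-like picture, so the equivariance can be pinned down by hand and combined with the already-known verifications for the other simple groups to conclude.
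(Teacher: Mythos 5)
The statement you are asked to prove is Alperin's blockwise weight conjecture for an \emph{arbitrary} finite group $G$ and an arbitrary prime $\ell$. This is an open conjecture; the paper states it as such and does not prove it (nor could it). What the paper actually establishes are special cases: the conjecture for all finite special linear and unitary groups (Theorem A), for finite groups with abelian Sylow $3$-subgroups (Theorem D), and the (iBAW) condition for certain blocks and certain simple groups of type $\mathsf A$. Your proposal correctly identifies the Navarro--Tiep/Sp\"ath reduction and gives an accurate sketch of the machinery the paper uses for type $\mathsf A$ (the Alperin--Fong/An classification of radical subgroups and weights of $\GL_n(\eta q)$ and $\GU_n(q)$, the Fong--Srinivasan/Brou\'e block parametrization, Clifford theory along $\SL_n\le\GL_n$, the Brough--Sp\"ath criterion with its equivariance and extension conditions, and the separate treatment of $\ell=2$ and small rank). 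It also correctly notes that for the non-simple groups $\SL_n(\eta q)$ one argues directly via the counting bijection rather than through (iBAW).

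The fatal gap is that the reduction theorem converts the universal statement into the requirement that \emph{every} finite non-abelian simple group (of order divisible by $\ell$) satisfies the (iBAW) condition for $\ell$, and your proposal only addresses simple groups of type $\mathsf A$ together with the short list of simple groups that can be involved in a group with abelian Sylow $3$-subgroups. The (iBAW) condition remains unverified for most simple groups --- for instance for classical groups of types $\mathsf B$, $\mathsf C$, $\mathsf D$ and for most exceptional groups at non-defining primes --- so the list of assertions produced by the reduction cannot be discharged, and no proof of the conjecture as stated can be completed by this (or, at present, any known) route. If your intent was to prove Theorems A and D of the paper rather than the conjecture itself, your outline is essentially the paper's; but as a proof of the statement actually given, it is not a proof, and you should say explicitly that the argument only yields the conjecture for those finite groups all of whose involved non-abelian simple sections are known to satisfy (iBAW).
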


While the Alperin weight conjecture was subsequently checked for
several families of finite groups, a significant breakthrough in the
case of general groups was achieved by  Navarro and Tiep \cite{NT11} in 2011,
where they reduced the block-free version of Alperin's weight conjecture to a question on simple groups.
In 2013, the blockwise version was also reduced by Sp\"ath \cite{Sp13}.
The blockwise Alperin weight conjecture holds
for all finite groups at the prime $\ell$, if all finite non-abelian simple groups satisfy
the so-called \emph{inductive blockwise Alperin weight (iBAW) condition} at the prime $\ell$.

This paper is a continuation of our previous paper \cite{FLZ20}.
In that paper, the inductive condition for the block-free version of Alperin weight conjecture from~\cite{NT11} has been verified for all simple groups of type $\mathsf A$.

We also focus on simple groups  of type $\mathsf A$ in this paper.
Let $q=p^f$ be a power of a prime $p$, $\eps=\pm 1$ and $\SL_n(\eps q)$  be the finite special linear or unitary group.
Here, $\SL_n(-q)$ is understood as $\SU_n(q)$.
We also denote $\PSL_n(\eps q)=\SL_n(\eps q)/Z(\SL_n(\eps q))$.
We remark that some progress has been made for type $\mathsf A$; for example, the (iBAW) condition has been verified for $\PSL_n(\eps q)$ with $\gcd(n,q-\eps)=1$ in \cite{LZ18, LZ19},
for unipotent blocks of $\SL_n(\eps q)$ with $\ell\nmid q \cdot\gcd(n,q-\eps)$ in \cite{Feng19},
and for blocks of $\SL_n(\eps q)$ with abelian defect groups when $\ell\nmid 3q(q-\eps)$ by Brough and Sp\"ath \cite{BS19a}.
For a list  about other types and more cases for which the inductive conditions has been verified, see for example \cite{FLZ20}  and the references therein.

Using the results in \cite{FLZ20}, we will verify the (iBAW) condition for a system of certain blocks of $\SL_n(\eps q)$; see Theorem \ref{iBAW-SLSU-some-special-case}.
We give some consequences of this result as follows. 
First, we achieve the blockwise Alperin weight conjecture for $\SL_n(\eps q)$, which generalises \cite[Thm.~1.2]{Feng19}.

\begin{mainthm}\label{awc-slsu}
The blockwise Alperin weight conjecture  holds for all finite special linear and unitary groups and all primes.
\end{mainthm}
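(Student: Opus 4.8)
The plan is to deduce the blockwise Alperin weight conjecture for $\SL_n(\eps q)$ (all primes $\ell$, all $q$) from the inductive blockwise Alperin weight condition verified in the body of the paper for a suitable family of blocks, together with the block-theoretic reduction theory for central extensions. First I would dispose of the primes $\ell$ that are handled by existing literature: when $\ell = p$ is the defining characteristic, $\SL_n(\eps q)$ has a unique $\ell$-block and the conjecture is classical (there is a single irreducible Brauer character in a block of defect zero after factoring out $O_\ell$, matching the Steinberg weight); when $\ell \nmid \gcd(n, q-\eps)$ one already knows the statement, e.g. via \cite{Feng19} for unipotent blocks and the Morita/Jordan decomposition reduction of a general block of $\SL_n(\eps q)$ to a unipotent block of a smaller group of the same type, where the counting of weights is compatible. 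Thus the remaining and genuinely new case is $\ell \mid \gcd(n, q-\eps)$ (in particular $\ell \neq p$), which is exactly where the center of $\SL_n(\eps q)$ interferes with Clifford theory over $\GL_n(\eps q)$.

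Next, for that remaining case I would invoke Theorem \ref{iBAW-SLSU-some-special-case}: it establishes the inductive blockwise Alperin weight condition for the relevant system of blocks of $\SL_n(\eps q)$, viewed as a quasi-simple group covering $\PSL_n(\eps q)$. Since the (iBAW) condition for a finite simple group $S$ implies (via \cite{Sp13}) the blockwise Alperin weight conjecture for every finite group whose composition factors lie among the relevant simple groups, and since the (iBAW) condition for $\PSL_n(\eps q)$ is formulated in terms of the action of $\Out(\PSL_n(\eps q))$ on the block-weight data of the universal covering group $\SL_n(\eps q)$ (possibly a proper quotient in the few exceptional small cases), I would argue that the statement of Theorem \ref{iBAW-SLSU-some-special-case} — combined with the already-known cases $\PSL_n(\eps q)$ with $\gcd(n,q-\eps)=1$ from \cite{LZ18, LZ19} and the partial results cited in the introduction — yields in particular a bijection $\cW_\ell(B) \leftrightarrow \IBr_\ell(B)$ for every $\ell$-block $B$ of $\SL_n(\eps q)$ itself. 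Concretely: the (iBAW) condition asks for an $\Out_\ell$-equivariant bijection between $\Irr(B)$-weights and Brauer characters compatible with block induction and with certain extension/stabilizer data; forgetting the equivariance and the $\tG$-data one retains exactly the bare equality $|\cW_\ell(B)| = |\IBr_\ell(B)|$ for $G = \SL_n(\eps q)$, which is the assertion of Main Theorem A.

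The main obstacle — and the reason the theorem is nontrivial beyond simply quoting \cite{Sp13} — is that the (iBAW) condition as reduced by Sp\"ath applies to \emph{simple} groups and delivers the conjecture for groups with those simple composition factors, whereas Main Theorem A is stated for the quasi-simple groups $\SL_n(\eps q)$ with their non-trivial center. Bridging this requires checking that the version of the inductive condition proved in Theorem \ref{iBAW-SLSU-some-special-case} for $\SL_n(\eps q)$ (as a central extension of $\PSL_n(\eps q)$) does descend to, and in fact directly contains, the blockwise count for $\SL_n(\eps q)$: one must track how blocks, radical subgroups, their normalizers, and defect-zero characters of $N_{\SL_n(\eps q)}(R)/R$ behave under the central quotient, using that central $\ell'$-subgroups are irrelevant and central $\ell$-subgroups are contained in every radical subgroup. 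I would therefore organize the final argument as: (i) reduce to $\ell \mid \gcd(n,q-\eps)$; (ii) recall that every $\ell$-block of $\SL_n(\eps q)$ covers a unique orbit of blocks of $\GL_n(\eps q)$ and lies over a block of $Z(\SL_n(\eps q))$, so that Theorem \ref{iBAW-SLSU-some-special-case} applies to the system in which $B$ sits; (iii) extract from that theorem the bijection on weights and Brauer characters, discarding the equivariance data; and (iv) note there is nothing further to check at defining characteristic or for the exceptional small values of $(n,q)$ where $\SL_n(\eps q)$ is not the full covering group, since those are finite in number and known. The delicate point throughout is step (iii)–(iv): ensuring the family of blocks covered by Theorem \ref{iBAW-SLSU-some-special-case} is genuinely all blocks of $\SL_n(\eps q)$ in the range $\ell \mid \gcd(n,q-\eps)$, with no gaps.
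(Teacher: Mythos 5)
Your proposal has a genuine gap at its central step. You propose to deduce the theorem from Theorem \ref{iBAW-SLSU-some-special-case}, but that theorem establishes the (iBAW) condition only for blocks satisfying one of its hypotheses (i)--(vii) (e.g.\ $\tG_B=\tG$, or $\gcd(|\tG:\tG_B|,|\tG_B:\tG_\psi|)=1$); it does not apply to an arbitrary $\ell$-block of $\SL_n(\eps q)$. Indeed the paper does \emph{not} verify the (iBAW) condition for all blocks of type $\mathsf A$ --- it obtains it only under extra hypotheses such as $\gcd(n,q-\eps)_{\ell'}$ square-free (Theorem \ref{ibaw-nq-1-prime}) or $n\le 7$. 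So the point you yourself flag as ``delicate'' in steps (iii)--(iv), namely that the family of blocks covered by Theorem \ref{iBAW-SLSU-some-special-case} exhausts all blocks with $\ell\mid\gcd(n,q-\eps)$, is in fact false, and the argument does not close. (Two smaller issues: in defining characteristic $\SL_n(\eps q)$ does not in general have a unique $\ell$-block --- the paper simply cites \cite{Ca88} for that case; and \cite{LZ18,LZ19} treat $\PSL_n(\eps q)$ with $\gcd(n,q-\eps)=1$ while \cite{Feng19} treats only certain blocks, so the case $\ell\nmid\gcd(n,q-\eps)$ is not fully ``already known'' in the form you assume.)

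The paper's proof needs far less than the inductive condition. Fix a block $\tilde B$ of $\tG=\GL_n(\eps q)$ and let $\mathcal B$ be the set of blocks of $G=\SL_n(\eps q)$ covered by $\tilde B$; these form a single $\tG$-orbit. The main theorem of \cite{FLZ20} (the block-free inductive AWC for type $\mathsf A$), combined with the known correspondence between $\IBr_\ell(\tilde B)$ and $\Alp_\ell(\tilde B)$ from \cite{AF90,An92,An93,An94}, yields the \emph{aggregate} equality $|\IBr_\ell(\mathcal B)|=|\Alp_\ell(\mathcal B)|$. Since the blocks in $\mathcal B$ are $\tG$-conjugate, they all have the same number of irreducible Brauer characters and the same number of weights, so the aggregate equality divides out to give $|\IBr_\ell(B)|=|\Alp_\ell(B)|$ for each $B\in\mathcal B$. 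No block-preserving or equivariant bijection, and no verification of (iBAW) for individual blocks, is required. To repair your argument you would need to replace the appeal to Theorem \ref{iBAW-SLSU-some-special-case} by exactly this conjugacy/averaging observation.
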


The (iBAW) condition is verified for the unipotent blocks of $\SL_n(\eps q)$ for $\ell\nmid \mathrm{gcd}(n,q-\eta)$ in \cite{Feng19}.
The following result generalized this work to all primes and more blocks.

\begin{mainthm}\label{uni-maxdef}
	Let $G=\SL_n(\eps q)$, $\ell$ a prime not dividing $q$ and $B$ an $\ell$-block of $G$.
	If
	$B$ is either a unipotent block
	or a block of maximal defect,
	then the inductive blockwise Alperin weight  condition holds for  $B$.  
\end{mainthm}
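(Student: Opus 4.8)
The plan is to deduce this from the main technical result, Theorem \ref{iBAW-SLSU-some-special-case}, which verifies the (iBAW) condition for a family of blocks of $\SL_n(\eps q)$. So the first step is to understand precisely which blocks are covered there, and then show that both unipotent blocks and blocks of maximal defect fall into that family (or can be reduced to it). For a block $B$ of $G = \SL_n(\eps q)$, write $\tG = \GL_n(\eps q)$; by Clifford theory along the central isogeny, $B$ is covered by some block $\tB$ of $\tG$. The $\ell$-blocks of $\tG$ are classified (Fong--Srinivasan, Broué--Michel) in terms of $\ell'$-semisimple elements $s$ and their centralizers; a key invariant is the structure of the relative Weyl group $W_{\tG}(s)$, which is a product of wreath products $\mathfrak{S}_{m_i}\wr C_{e}$-type groups controlling the defect and the local structure. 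When $B$ (equivalently $\tB$) is unipotent, $s=1$; when $B$ has maximal defect, the defect group is a Sylow $\ell$-subgroup of $G$, which forces $C_{\tbG}(s)$ to contain a full-rank torus of the relevant kind, again constraining $s$. In both cases the relevant data should match the hypotheses of Theorem \ref{iBAW-SLSU-some-special-case}.

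The main steps I would carry out are as follows. First, reduce to $\tG$-blocks: recall (from the cited reduction theorems, e.g.\ \cite{Sp13} and the criterion used throughout for type $\mathsf A$) that verifying (iBAW) for $B$ amounts to exhibiting an $N_{\tG}(R) \rtimes (\tG/G \times \text{field/graph automorphisms})$-equivariant bijection between weights and Brauer characters, compatible with blocks and with the action of $Z(\SL_n)$ and $\Out(\PSL_n(\eps q))$, together with the cohomological/central-character conditions. Second, identify the block invariants: for a unipotent block, the defect group and the local structure $N_G(R)/R$ are governed by the combinatorics of $e = e_\ell(\eps q)$-cores and $e$-quotients of partitions of $n$; for a maximal-defect block, a similar description holds but with $s$ a nontrivial $\ell'$-element whose centralizer is as large as possible, so that after Bonnafé--Rouquier-type Morita equivalence / Jordan decomposition of blocks, the block is Morita equivalent to a unipotent block of a group $\prod_i \GL_{m_i}(\eps q^{d_i})$ of the same "shape." Third, invoke Theorem \ref{iBAW-SLSU-some-special-case}: check that the pair $(n, \ell)$ together with the block datum satisfies its hypotheses — I expect those hypotheses to be phrased exactly so as to include "unipotent" and "maximal defect" as the two guiding examples — and conclude.

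The hard part will be the equivariance and the handling of the center. For blocks of $\GL_n$ the weight-character bijection and its equivariance under field and graph automorphisms is essentially known, but passing down to $\SL_n(\eps q)$ introduces the subgroup $Z(\SL_n(\eps q))$ and the diagonal automorphisms $\tG/G$, and one must control how these act on both sides — on Brauer characters via Clifford theory over $\tB$, and on weights $(R,\varphi)$ via the action on $N_{\tG}(R)/N_G(R)$. When $\ell \mid \gcd(n, q-\eps)$ the covering $\tB \to B$ can be ramified and the "ordinary" weight bijections do not directly restrict; this is exactly the obstruction that distinguishes the present theorem from the earlier work in \cite{Feng19, BS19a}, and it is where Theorem \ref{iBAW-SLSU-some-special-case} (built on the stabilizer and extension results of \cite{FLZ20}) does the real work. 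So the bulk of the argument is bookkeeping: matching the two sides' stabilizers in $\tG/G \times \langle\text{field, graph}\rangle$, verifying that the character triples $(N_{\tG}(R), R, \varphi)$ behave well (in particular that the relevant cohomology classes vanish, or agree on both sides), and then citing Theorem \ref{iBAW-SLSU-some-special-case} to finish. I would also remark that when $\ell \mid q$ the defect-zero/projective phenomena make the statement either vacuous or immediate (there are no nontrivial radical $\ell$-subgroups issue in the split case, or one reduces to the principal block), so the substantive case is $\ell \nmid q$, consistent with the hypothesis already imposed.
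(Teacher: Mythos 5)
Your high-level plan (deduce the theorem from Theorem \ref{iBAW-SLSU-some-special-case}) is the right one, but the proposal never identifies \emph{which} hypothesis of that theorem is satisfied, nor why, and that is the entire content of the paper's (very short) proof. The relevant condition is (i): $\tG_B=\tG$, i.e.\ the block $B$ of $G=\SL_n(\eps q)$ is stable under conjugation by $\tG=\GL_n(\eps q)$ (equivalently, the block $\tB$ of $\tG$ above $B$ covers only $B$). For unipotent blocks this follows from \cite[Remark~4.13]{Feng19}, and for blocks of maximal defect from \cite[Prop.~5.4]{CS15}. Once $\tG_B=\tG$ is known, the whole descent problem you describe at length --- ramification of the covering $\tB\to B$ when $\ell\mid\gcd(n,q-\eps)$, matching stabilizers of Brauer characters and weights in $\tG/G\rtimes D$, block-preservation of the bijection --- is exactly what is already packaged into Theorem \ref{iBAW-SLSU-some-special-case}(i), so no further bookkeeping is needed.

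Concretely, the gap is this: you write ``check that the pair $(n,\ell)$ together with the block datum satisfies its hypotheses --- I expect those hypotheses to be phrased exactly so as to include `unipotent' and `maximal defect','' but the hypotheses of Theorem \ref{iBAW-SLSU-some-special-case} are stabilizer conditions on $B$ and on $\psi\in\IBr_\ell(\mathcal B)$ inside $\tG$ and $\tG\rtimes D$, not conditions on $e$-cores, relative Weyl groups, or Morita equivalence type. Your proposed route through Bonnaf\'e--Rouquier/Jordan decomposition of blocks and the combinatorics of $e$-cores does not by itself produce the needed stabilizer statement $\tG_B=\tG$ (and for maximal-defect blocks the semisimple label $s$ is in general nontrivial, so ``$s=1$'' reasoning does not apply there). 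Without the observation that these two classes of blocks are $\tG$-stable, together with the references establishing it, the argument does not close. Your closing remark about $\ell\mid q$ is moot: the statement already assumes $\ell\nmid q$, and the defining-characteristic case is handled elsewhere by \cite[Thm.~C]{Sp13}.
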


For general blocks of special linear and unitary groups, we prove the following theorem which 
generalized the work for simple groups of type $\mathsf A$ with trivial Schur multiplier
in \cite{LZ18,LZ19}.

\begin{mainthm}\label{ibaw-nq-1-prime}
Let $S=\PSL_n(\eps q)$ be a simple group
and $\ell$ be a prime not dividing $q$. 	
	Assume that $\gcd(n,q-\eps)_{\ell'}$ is square-free.
	Then the inductive blockwise Alperin weight condition holds for $S$ and $\ell$.
\end{mainthm}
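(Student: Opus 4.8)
The plan is to deduce this from Theorem \ref{iBAW-SLSU-some-special-case} (the main technical result, which verifies the (iBAW) condition for a system of blocks of $G = \SL_n(\eps q)$) together with the reduction machinery for simple groups. First I would recall that, by the general criterion for the (iBAW) condition, to verify it for $S = \PSL_n(\eps q)$ at $\ell$ one must produce, for the universal cover (up to the exceptional multipliers, which occur only for finitely many small $(n,q)$ that can be checked separately) $G = \SL_n(\eps q)$, an $\Aut(G)_B$-equivariant bijection $\cW_\ell(B) \to \IBr_\ell(B)$ for each $\ell$-block $B$, compatible with the relevant Clifford-theoretic data ($\Out(G)_{B}$-action, character triple isomorphisms/central-character conditions, block induction). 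The hypothesis that $\gcd(n, q - \eps)_{\ell'}$ is square-free is precisely the constraint appearing in Theorem \ref{iBAW-SLSU-some-special-case}: it controls the structure of the relevant relative Weyl groups and the $\ell'$-part of the center $Z(\SL_n(\eps q))$, ensuring the Clifford theory between $\SL_n(\eps q)$ and $\GL_n(\eps q)$ (equivalently, the action of the diagonal automorphisms) stays ``abelian enough'' for the stabilizer/extension arguments to go through.

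The key steps, in order, would be: (1) Reduce to the simply connected group $G = \SL_n(\eps q)$, separating off the finitely many cases where the Schur multiplier of $\PSL_n(\eps q)$ is exceptional and handling those by direct computation (GAP/explicit character tables) as is standard. (2) Split the blocks of $G$ according to their behaviour under the prime $\ell$: when $\ell \mid q$ the group is a group of Lie type in defining characteristic and the (iBAW) condition there can be invoked from the literature; when $\ell \nmid q$, apply Theorem \ref{iBAW-SLSU-some-special-case}, whose hypotheses are met exactly because $\gcd(n,q-\eps)_{\ell'}$ is square-free. (3) Assemble the block-by-block bijections and equivariance data of Theorem \ref{iBAW-SLSU-some-special-case} into the global statement of the (iBAW) condition for $S$, invoking the result from \cite{FLZ20} for the block-free compatibility data already established there and checking that the additional blockwise requirements (stability of blocks under automorphisms, block induction $\bl(\varphi)^{N_G(R)}$) are respected by the constructed maps.

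The main obstacle I expect is Step (2)/(3): namely, verifying that the bijections coming from Theorem \ref{iBAW-SLSU-some-special-case} genuinely satisfy the full strength of the (iBAW) condition — in particular the equivariance under the full automorphism group (diagonal, field, and graph automorphisms simultaneously) and the cohomological/character-triple condition on the normaliser side. The square-free hypothesis on $\gcd(n,q-\eps)_{\ell'}$ is what makes the relevant stabilisers cyclic (or close to it), so that the extension of characters from $N_G(R)$ to $N_{G \rtimes \Aut}(R)$ is unobstructed and the central-character/congruence conditions reduce to something checkable; without it one would face genuinely nonabelian local structure and the argument would not close. A secondary technical point is bookkeeping the interaction between the $\ell \mid q$ and $\ell \nmid q$ cases and the finitely many exceptional-multiplier groups, but that is routine given the existing literature.
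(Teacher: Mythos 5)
Your overall strategy (reduce to the universal cover $G=\SL_n(\eps q)$ after disposing of the exceptional Schur multiplier cases, then feed the square-free hypothesis into Theorem \ref{iBAW-SLSU-some-special-case}) is the right shape, and the reduction step matches the paper. (The digression about $\ell\mid q$ is moot, since the theorem assumes $\ell\nmid q$.) But the core of the argument is missing: Theorem \ref{iBAW-SLSU-some-special-case} does not have ``$\gcd(n,q-\eps)_{\ell'}$ square-free'' as a hypothesis; it offers seven alternative sufficient conditions (i)--(vii), and you never identify which one the square-free assumption delivers, nor how. The paper uses condition (v), namely $\gcd(|\tG:\tG_B|,|\tG_B:\tG_\psi|)=1$ for all $B$ in the $\tG$-orbit and all $\psi\in\IBr_\ell(\mathcal B)$, and the deduction is a short but specific arithmetic argument: since $\tG_\psi\supseteq GZ(\tG)$, the index $|\tG:\tG_\psi|$ divides $\gcd(n,q-\eps)$, so $|\tG:\tG_\psi|_{\ell'}$ is square-free by hypothesis; on the other hand $\ell\nmid|\tG:\tG_B|$ (a general fact about the orbit of blocks of $G$ covered by a block of $\tG$), so $|\tG:\tG_B|$ and $|\tG_B:\tG_\psi|_{\ell'}$ are coprime divisors of a square-free number, whence the required coprimality. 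Without this step your proof does not close.

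Moreover, your explanation of \emph{why} the square-free hypothesis is needed points at the wrong mechanism. You attribute it to making stabilisers cyclic so that character extensions from $N_G(R)$ to its stabiliser are unobstructed; but all of the extension and equivariance requirements (conditions (i)--(iv) of the criterion in Theorem \ref{thm:criterion-block}) are already established unconditionally for type $\mathsf A$ in \cite{FLZ20}. The only thing the square-free hypothesis buys is the new block-compatibility condition (v) of that criterion: it guarantees, via the coprimality of indices above, that the Clifford-theoretic representatives $\psi_0\in\IBr_\ell(G\mid\tpsi)$ and $(R,\varphi_0)$ covered by $(\tR,\tvarphi)=\tOmega(\tpsi)$ can be chosen to lie in the \emph{same} block $B$ of $G$, so that the resulting bijection $\Omega$ preserves blocks. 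Identifying and proving that coprimality is the actual content of the proof.
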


From this, we prove that the (iBAW) condition holds for simple groups $\PSL_n(q)$ and $\PSU_n(q)$ for $n\le 7$ (see Proposition \ref{iBAW-small-rank}).

In \cite{Sp13}, Sp\"ath proved the blockwise Alperin weight conjecture  holds for finite groups with abelian Sylow $2$-subgroups  via the (iBAW) condition.
We will consider finite groups with abelian Sylow $3$-subgroups and
prove in this paper the following result.

\begin{mainthm}\label{abel-syl-3}
	Assume that G is a finite group with abelian Sylow 3-subgroups.
Then the blockwise Alperin weight conjecture holds for $G$ and any prime.
\end{mainthm}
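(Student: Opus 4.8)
The plan is to combine Sp\"ath's reduction theorem for the blockwise Alperin weight conjecture \cite{Sp13} with the classification of the finite simple groups having abelian Sylow $3$-subgroups. By \cite{Sp13}, the blockwise Alperin weight conjecture holds for $G$ at a prime $\ell$ as soon as every non-abelian simple group involved in $G$ --- that is, isomorphic to $H/N$ for some $N\trianglelefteq H\le G$ --- satisfies the inductive blockwise Alperin weight condition at $\ell$. A subgroup of an abelian group is abelian, and so is a quotient of one; hence every simple group involved in $G$ again has abelian Sylow $3$-subgroups. It therefore suffices to prove that \emph{every finite non-abelian simple group $S$ with abelian Sylow $3$-subgroups satisfies the inductive blockwise Alperin weight condition at every prime.}

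I would then run through the list of such $S$. If $3\nmid|S|$, i.e.\ $S={}^2B_2(2^{2n+1})$, the condition is vacuous at $\ell=3$ and is already known at the remaining primes. For the alternating groups and the finitely many sporadic groups with abelian Sylow $3$-subgroups (among them $A_7$, $\mathrm{J}_1$ and the Tits group ${}^2F_4(2)'$) the condition is verified at all primes in the literature, and for the groups of Lie type considered in their defining characteristic $p$ the condition at $\ell=p$ is likewise known; see \cite{FLZ20} and the references therein. We may thus assume $S$ is of Lie type and $\ell\nmid q$. The type-$\mathsf A$ groups on the list are the groups $\PSL_n(\eps q)$ with $3\nmid q$ and $n$ bounded --- namely $\PSL_2(q)$ for every $q$, the groups $\PSL_3(q)$ with $q\not\equiv1\pmod 9$ and $\PSU_3(q)$ with $q\not\equiv-1\pmod 9$, and $\PSL_n(\eps q)$ with $4\le n\le5$ and $3\mid q+\eps$ --- and for these the inductive condition at $\ell\nmid q$ follows from Theorem \ref{iBAW-SLSU-some-special-case}, with Theorems \ref{ibaw-nq-1-prime} and \ref{uni-maxdef} covering the cases where $\gcd(n,q-\eps)_{\ell'}$ is square-free and where the block is unipotent or of maximal defect, respectively, together with the known result on blocks with cyclic defect groups for the remaining $\ell$. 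The non-type-$\mathsf A$ groups of Lie type on the list are of small rank in characteristic $\ne p$ --- notably $\PSL_2(q)$ (counted once) and $\PSp_4(q)$ with $3\nmid q(q-1)$ --- and the inductive condition for these at all primes $\ell\ne p$ is available in the literature. Assembling these verifications proves that every simple group with abelian Sylow $3$-subgroups satisfies the inductive blockwise Alperin weight condition at every prime, which gives the theorem.

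The main difficulty here is organisational rather than a single deep step: one must be certain that the classification of simple groups with abelian Sylow $3$-subgroups is complete, and that for \emph{every} group on it and \emph{every} prime --- in particular the prime $3$ itself, the defining characteristic, and the primes dividing $q-\eps$ in the linear and unitary cases --- a verification of the inductive blockwise Alperin weight condition is on record. The one genuinely new input, and the reason Theorem \ref{abel-syl-3} can be proved at this point, is the treatment of the type-$\mathsf A$ groups supplied by Theorem \ref{iBAW-SLSU-some-special-case} together with its consequences Theorems \ref{ibaw-nq-1-prime} and \ref{uni-maxdef}.
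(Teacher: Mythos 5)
Your proposal is correct and follows essentially the same route as the paper: Sp\"ath's reduction theorem, the classification of non-abelian simple groups with abelian Sylow $3$-subgroups (the paper cites \cite{FLL17b} for the Lie-type list: $\PSL_n(\eps q)$ with $n\le 5$, Suzuki groups, and $\PSp_4(q)$ with $3\nmid q$), the existing literature for alternating, sporadic, defining-characteristic, Suzuki and $\PSp_4$ cases, and the paper's own type-$\mathsf A$ results (packaged there as Proposition \ref{iBAW-small-rank}, whose $n=4$ case rests on the more delicate Lemma \ref{ibaw-n-4}). The only quibbles are cosmetic: your restriction of $\PSp_4(q)$ to $3\nmid q(q-1)$ is unnecessary (and these groups are covered by the cited verifications for all $q$ prime to $3$ anyway), and the $\PSL_4(\eps q)$ case needs the slightly finer block-by-block analysis that the paper carries out rather than a bare appeal to Theorem \ref{iBAW-SLSU-some-special-case}.
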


The last section of this paper is a continuation of \cite[\S 4]{Feng19}.
There is a classification of the $\ell$-blocks of $\SL_n(\eta q)$ in \cite[\S 4]{Feng19} when $\ell\nmid q$ is odd, using the labelling set of $d$-Jordan-cuspidal pairs given by Cabanes--Enguehard \cite{CE99} and Kessar--Malle~\cite{KM15}.
The results related to $d$-Jordan-cuspidal pairs in  \cite{CE99,KM15} are restricted to odd primes.
In Section \ref{2-blocks-SLSU} of this paper, we consider the 2-blocks of $\SL_n(\eta q)$ (with odd $q$). 
Our result relies on the classification of the Brauer pairs of $\GL_n(\eta q)$ by Brou\'e~\cite{Brou86} and the description of the radical subgroups of $\SL_n(\eta q)$ in~\cite{FLZ20}.
The number of 2-blocks of $\SL_n(\eta q)$  covered by a given 2-blocks of $\GL_n(\eta q)$ is determined in Remark \ref{blocksofslsu},
and in this way we obtain a parametrization for the 2-blocks of finite special linear and unitary groups, which complements the result of \cite[\S 4]{Feng19}.

\vspace{2ex}
We begin Section \ref{criterion-iBAW} by recalling the criterion for the (iBAW) condition given by Brough-Sp\"ath and give some notation and preliminaries for linear and unitary groups in Section \ref{notation-linear-unitary-groups}.
Using the results of \cite{FLZ20}, it suffices to consider only one condition for type $\mathsf A$, and from this we verify the (iBAW) condition for certain cases of groups of type $\mathsf A$ and prove Theorem \ref{awc-slsu}--\ref{ibaw-nq-1-prime} in Section \ref{sec:iBAW-A}.
The blockwise Alperin weight conjecture is verified to hold for finite groups with abelian Sylow $3$-subgroups in Section \ref{AWC-abel-sylow3}. 
In Section \ref{2-blocks-SLSU}, we give a classification of the 2-blocks of finite special linear and unitary groups; see Remark \ref{blocksofslsu}.


\section{A criterion for the inductive blockwise Alperin weight condition}\label{criterion-iBAW}

All groups considered in this paper are finite.
For the notation for the block and character theory, we mainly follow \cite{Is76,Na98}, except that
we denote the restriction of $\chi\in\Irr(G)\cup\IBr_\ell(G)$ to some subgroup $H\le G$ by $\Res^G_H\chi$, while $\Ind^G_H\psi$ denotes the character induced from $\psi\in\Irr(H)\cup\IBr_\ell(H)$ to $G$.

If a group $A$ acts on a finite set $X$, we denote by $A_x$ the stabilizer of $x\in X$ in $A$, analogously we denote by $A_{X'}$ the setwise stabilizer of $X'\subseteq X$.
If $A$ acts on a group $G$ by automorphisms, there is a natural action of $A$ on $\Irr(G)\cup\IBr_\ell(G)$ given by ${}^{a^{-1}}\chi(g)=\chi^a(g)=\chi(g^{a^{-1}})$ for every $g\in G$, $a\in A$ and $\chi\in\Irr(G)\cup\IBr_\ell(G)$.
For $P\le G$ and $\chi\in\Irr(G)\cup\IBr_\ell(G)$, we denote by $A_{P,\chi}$ the stabilizer of $\chi$ in $A_P$.
For $N\unlhd G$ we sometimes identify the characters of $G/N$ with the characters of $G$ whose kernel contains $N$.
If $\chi\in\Irr(G)$, then $\chi^0$ is used for the restriction to the $\ell$-regular elements of
$G$.

If $G$ is abelian, we also write $\Lin(G)=\Irr(G)$ since all irreducible characters of $G$ are linear.
Let $\Lin_{\ell'}(G)$ denote the element of $\Lin(G)$ of $\ell'$-order.
Then the map $\Lin_{\ell'}(G)\to \IBr_\ell(G)$, $\chi\mapsto \chi^0$ is bijective.
From this, we always identify $\IBr_\ell(G)$ with $\Lin_{\ell'}(G)$ when $G$ is abelian.

For a finite group $G$, we denote by $\dz_\ell(G)$ the set of all $\ell$-defect zero characters of $G$.
If $\chi\in\Irr(G)\cup\IBr_\ell(G)$, we write $\bl_\ell(\chi)$ for the $\ell$-block of $G$ containing $\chi$.
If $R$ is a radical $\ell$-subgroup of $G$ and $B$ is an $\ell$-block of $G$, then we define the set
$$\dz_\ell(N_G(R)/R,B):=\{\varphi\in\dz_\ell(N_G(R)/R)\mid\bl_\ell(\varphi)^G=B\},$$
where we regard $\varphi$ as an irreducible character of $N_G(R)$ containing $R$ in its kernel when considering the induced block $\bl_\ell(\varphi)^G$.

\vspace{2ex}

The inductive blockwise Alperin weight (iBAW)  condition can be stated using the notion of modular character triples  and isomorphisms between them (for background on modular character triples, see, e.g., \cite[\S 8]{Na98}).
By \cite[Thm.~4.4]{Sp17},
an $\ell$-block $B$ of $G$ satisfies 
the  (iBAW)  condition from \cite[Def.~4.1]{Sp13}
if for $\Gamma:=\Aut(G)_B$
there exists a $\Gamma$-equivariant bijection $\Omega:\IBr_\ell(B)\to\Alp_\ell(B)$ such that for every $\psi\in\IBr_\ell(B)$ and $\Omega(\psi)=(R,\varphi)$, one has
\begin{equation}\label{block-iso-triple}
(G\rtimes \Gamma_{R,\varphi}, G,\psi )\geqslant_b(N_G(Q) \rtimes \Gamma_{R,\varphi},N_G(Q),\varphi^0).
\addtocounter{thm}{1}\tag{\thethm}
\end{equation}
For the definition of the relation  $\geqslant_b$, which is called the \emph{block isomorphism of modular character triples},
see \cite[Def.~3.2]{Sp17}.

Recently,  J. Brough and B. Sp\"ath  \cite[Thm.~4.5]{BS19} gave a 
criterion for the inductive Alperin weight condition adapted to quasi-simple groups of Lie type with abelian outer automorphism groups.
Here we rewrite it and give a new version suitable for quasi-simple groups with possibly non-abelian outer automorphism groups.
In fact, conditions (i) -- (iv) are the same, and condition
(v) which considers relations of blocks for irreducible constituents of (Brauer) characters
is altered.

\begin{thm}\label{thm:criterion-block}
	Let $S$ be a finite non-abelian simple group and $\ell$ a prime dividing $|S|$.
	Let $G$ be the universal $\ell'$-covering group of $S$, $\mathcal B$ a union of $\ell$-blocks of $G$ and assume
 there are groups $\tG$, $D$ such that $G \unlhd \tG \rtimes D$,
$\mathcal B$ is a $\tG$-orbit
and the following hold.
	\begin{enumerate}[\rm(i)]\setlength{\itemsep}{0pt}
		\item \begin{enumerate}[\rm(a)]\setlength{\itemsep}{0pt}
			\item $G=[\tG,\tG]$ and $D$ is abelian,
			\item $C_{\tG D}(G)=Z(\tG)$ and $\tG D/Z(\tG) \cong \Aut(G)$,
			\item any element of $\IBr_\ell(\mathcal B)$ extends to its stabilizer in $\tG$,
			\item for any radical $\ell$-subgroup $R$ of $G$ and any $B\in \mathcal{ B}$, any element of $\dz_\ell(N_G(R)/R\mid B)$ extends to its stabilizer in $N_{\tG}(R)/R$.
		\end{enumerate}
		\item Let $\mathcal{\tilde B}$ be the union of $\ell$-blocks of $\tG$ covering $\mathcal B$.
There exists a $\Lin_{\ell'}(\tG/G) \rtimes D_{\mathcal{\tB}}$-equivariant bijection
		$\tOmega: \IBr_\ell(\mathcal{\tilde B}) \to \Alp_\ell(\mathcal{\tilde B})$ such that
		\begin{enumerate}[\rm(a)]\setlength{\itemsep}{0pt}
			\item $\tOmega(\IBr_\ell(\tilde B)) =  \Alp_\ell(\tilde B)$
			for every $\tilde B\in\tcB$,
			\item $\J_G(\tpsi) = \J_G(\tOmega(\tpsi))$ for every $\tpsi \in \IBr_\ell(\tG)$.
		\end{enumerate}
		\item For every $\tpsi\in\IBr_\ell(\mathcal{\tilde B})$, there exists some $\psi_0\in\IBr_\ell(G\mid\tpsi)$ such that
		\begin{enumerate}[\rm(a)]\setlength{\itemsep}{0pt}
			\item $(\tG\rtimes D)_{\psi_0}=\tG_{\psi_0}\rtimes D_{\psi_0}$,
			\item $\psi_0$ extends to $G\rtimes D_{\psi_0}$.
		\end{enumerate}
		\item For every $(\tR,\tvarphi)\in\Alp_\ell(\mathcal{\tilde B})$, there is an $\ell$-weight $(R,\varphi_0)$ of $G$ covered by $(\tR,\tvarphi)$ such that
		\begin{enumerate}[\rm(a)]\setlength{\itemsep}{0pt}
			\item $(\tG D)_{R,\varphi_0} = \tG_{R,\varphi_0} (GD)_{R,\varphi_0}$,
			\item $\varphi_0$ extends to $(G\rtimes D)_{R,\varphi_0}$.
		\end{enumerate}
\item If the $\ell$-Brauer character $\tpsi$ in (iii) and the weight $(\tR,\tvarphi)$ in (iv) satisfy $\overline{(\tR,\tvarphi)}=\tilde\Omega(\tpsi)$,
then the $\ell$-Brauer character $\psi_0$ and the weight $(R,\varphi_0)$  can be chosen in the same block $B\in\mathcal B$ and
satisfy 
\begin{equation}\label{equ:block-corr}
	\bl_\ell(\hat\psi)=\bl_\ell(\hat\varphi)^{\tG_\psi}
	\addtocounter{thm}{1}\tag{\thethm}
\end{equation} 
where $\hat\psi\in\IBr_\ell(\tG_\psi\mid\psi)$ is the Clifford correspondent of $\tpsi$, $\hat\varphi$ is an extension of $\varphi_0$ to $N_{\tG}(R)_\varphi/R$ such that via induction and the map $\Delta_\varphi$ from \cite[Thm.~2.10]{BS19} it corresponds to $(\tR,\tvarphi)$.

	\end{enumerate}
	Then the inductive blockwise Alperin weight  (iBAW)  condition holds for every block $B\in\mathcal B$.
\end{thm}

For the definition of $\J_G(\tilde\psi)$, see \cite[\S 2]{BS19}.

\begin{proof}
We use the construction in
\cite[Thm.~4.5]{BS19}.
In fact, according to its proof, 
the conditions (i)--(iv) already give a $(\tG \rtimes D)_{\mathcal B}$-equivariant bijection $\Omega:\IBr_\ell(\mathcal B)\to\Alp_\ell(\mathcal B)$.
Similar as the proof of \cite[Lemma~4.6]{BS19}, we can show that (\ref{block-iso-triple}) holds via $\Omega$.
Thus it suffices to show that $\Omega$ preserves blocks.
For $\tpsi\in\IBr_\ell(\tilde{\mathcal B})$ and  weight $\overline{(\tR,\tvarphi)}\in\Alp_\ell(\tilde{\mathcal B})$ satisfying $\overline{(\tR,\tvarphi)}=\tilde\Omega(\tpsi)$,
we let $\psi_0$ and $(R,\varphi_0)$ satisfy (iii) and (iv) respectively.
Then 
$\psi_0$ and $\overline{(R,\varphi_0)}$ have the same stabilizer in $\tG\rtimes D$
and by the construction of $\Omega$ in \cite{BS19}, we have
$\Omega(\psi_0)=\overline{(R,\varphi_0)}$.
From this, if $\psi_0$ and $(R,\varphi_0)$ can be chosen in the same block of $G$, then $\Omega$ preserves blocks, as desired.
\end{proof}

Let $S$ be a non-abelian finite simple group, $\ell$ a prime dividing $|S|$ and $G$ the universal $\ell'$-covering group of $S$. We say that \emph{the (iBAW) condition holds for $S$ and $\ell$} if the (iBAW) condition holds for every $\ell$-block of $G$.
Moreover, we say \emph{the (iBAW) condition holds for $S$} if the  (iBAW) condition holds for $S$ and any prime $\ell$ dividing $|S|$.

\begin{lem}\label{lem:covering}
	Let $\tG$ be an arbitrary finite group and $G\unlhd \tG$ with abelian quotient $\tG/G$.
	Let $\tpsi\in\IBr_\ell(\tG)$, $\psi\in\IBr_\ell(G\mid\tpsi)$, $\tB=\bl_\ell(\tpsi)$ and $B=\bl_\ell(\psi)$.
	Assume that $\psi$ extends to $\tG_\psi$ and  $\gcd(|\tG_\psi/GZ(\tG)|_{\ell'},|\tG/\tG_\psi|)=1$.
	Then there is a unique block $\hat B$ of $\tG_\psi$ such that $\tB$ covers $\hat B$ and $\hat B$ covers $B$.
\end{lem}	

\begin{proof}
	Note that 
$GZ(\tG)\cong G\times Z(\tG)/(G\cap Z(\tG))$.
Let $\IBr_\ell(G\cap Z(\tG)\mid\psi)=\{\tau\}$.
Then the blocks of $GZ(\tG)$ covering $B$ are parametrized by $\IBr_\ell(Z(\tG)\mid\tau)$.
More precisely, those blocks are $B'_\la$ with $\la\in \IBr_\ell(Z(\tG)\mid\tau)$ and $\IBr_\ell(B'_\la)=\{\, \la\cdot\chi\mid \chi\in\IBr_\ell(B) \,\}$ where $\la\cdot\chi(zg)=\la(z)\chi(g)$ for the $\ell$-regular elements $z\in Z(\tG)$ and $g\in G$.
Any two of those blocks are not $\tG$-conjugate.
For every subgroup $GZ(\tG)\le G_1\le\tG$ and every block $B_1$ of $G_1$, 
we let $\la_1\in \IBr_\ell(Z(\tG)\mid\chi_1)$ for $\chi_1\in\IBr_\ell(B_1)$.
Then
$B_1$ covers $B$ if and only if $B_1$ covers $B'_{\la_1}$.
So we can assume that $G=GZ(\tG)$ without loss of generality.

	Let $\hat G=\tG_\psi$ and let $\hat\psi\in\IBr_\ell(\hat G)$ be the Clifford correspondent of  $\tpsi$.
	Then $\hat \psi$ is an extension of $\psi$.
	Let $\hat B_0=\bl_\ell(\hat\psi)$.
	Then $\tB$ covers $\hat B_0$ and $\hat B_0$ covers $B$.
	Let $\mathcal L=\{\, \hat\psi^g\mid g\in\tG \,\}$ and $\mathcal S=\{\, \lambda\hat\psi\mid\lambda\in\IBr_\ell(\hat G/G) \,\}$.
	Then $\mathcal L=\IBr_\ell(\hat G\mid\tpsi)$ and $\mathcal S=\IBr_\ell(\hat G\mid\psi)$.
	Moreover, $\mathcal S$ consists of the extensions of $\psi$ to $\hat G$ and $\mathcal L\cap \mathcal S=\{\hat\psi\}$.
	
	Let $\mathcal A=\{\, \lambda\hat\psi^g\mid g\in\tG,\lambda\in\IBr_\ell(\hat G/G) \,\}$ be the $\IBr_\ell(\hat G/G)\times (\tG/\hat G)$-orbit on $\IBr_\ell(\hat G)$ containing $\hat\psi$.
	Note that $\IBr_\ell(\hat G/G)$ permutes the blocks of $\hat G$ covering $B$.
	Now  $\gcd(|\tG_\psi/G|_{\ell'},|\tG/\tG_\psi|)=1$, so every subgroup of $\IBr_\ell(\hat G/G)\times (\tG/\hat G)$ is of form $H\times K$ with $H\le \IBr_\ell(\hat G/G)$ and $K\le \tG/\hat G$.
In particular,		the stabilizer of $\hat B_0$ in $\IBr_\ell(\hat G/G)\times (\tG/\hat G)$ is $\mathcal I\times (\tG_{\hat B_0}/\hat G)$
where $\mathcal I$ is the stabilizer of $\hat B_0$ in $\IBr_\ell(\hat G/G)$.
		Thus $\IBr_\ell(\hat B_0)\cap\mathcal A=\{\, \lambda\hat\psi^g\mid g\in\tG_{\hat B_0}/\hat G,\lambda\in\mathcal I  \,\}$.
	
	Assume that $\hat B$ is a block of $\hat G$ such that  $\tB$ covers $\hat B$ and $\hat B$ covers $B$. 
	Then there exists $g\in\tG$ such that $\hat B=\hat B_0^g$. In particular, $\hat\psi^g\in \IBr_\ell(\hat B)$.
	Since $\hat B$ covers $B$, one has that $\hat B=\lambda\otimes\hat B_0$ with $\lambda\in\IBr_\ell(\hat G/G)$.
	Then $\hat\psi^g=\lambda \hat \psi'$ with $\hat\psi'\in\IBr_\ell(\hat B_0)$.
	It follows that $\hat \psi'=\lambda^{-1}\hat\psi^g\in\mathcal A$.
	Let $\hat\psi'=\lambda'\hat\psi^{g'}$ with $\lambda'\in\mathcal I$ and $g'\in \tG_{\hat B_0}$.
	Then $\hat\psi^{g{g'}^{-1}}=\lambda\lambda'\hat\psi$.
	This implies that  $\lambda\lambda'\in\mathcal I_{\hat\psi}$ and $g{g'}^{-1}\in\tG_{\hat\psi}$ because $\mathcal L\cap \mathcal S=\{\hat\psi\}$.
	So $g\in\tG_{\hat B_0}$ and $\hat B=\hat B_0$, which complete the proof.
\end{proof}

\begin{prop}\label{prop:(2.3)}
In Theorem~\ref{thm:criterion-block}, if moreover one of the following holds:
\begin{enumerate}[\rm(i)]
		\item if $B'$ is a block of some group $J$ with $G\le J\le \tG$ that is covered by a block of $\tcB$, then $\tG_{B'}=\tG$,
	\item  there is some  $\chi\in\Irr(\tcB)\cup\IBr_\ell(\tcB)$ such that $\Res^{\tG}_{G}\chi$ is irreducible,
	\item $|\tG/GZ(\tG)|_{\ell'}$ is square-free,
	\end{enumerate}
then (\ref{equ:block-corr}) holds.
\end{prop}

\begin{proof}
Let $\psi_0$ and $(R,\varphi_0)$ be as in Theorem~\ref{thm:criterion-block} (v).
We will prove that
$\bl_\ell(\hat\psi)=\bl_\ell(\hat\varphi)^{\tG_\psi}$,
where $\hat\psi\in\IBr_\ell(\tG_\psi\mid\psi)$ is the Clifford correspondent of $\tpsi$, and $\hat\varphi$ is an extension of $\varphi_0$ to $N_{\tG}(R)_\varphi/R$ such that via induction and the map $\Delta_\varphi$ (from \cite[Thm.~2.10]{BS19}) it corresponds to $(\tR,\tvarphi)$.
Since $\Omega$ and $\tOmega$ respect blocks, by 
\cite[Lemma~2.3]{KS15}, one has $\bl_\ell(\hat\varphi)^{\tG_\psi}$ covers $\bl_\ell(\psi)$. Also by \cite[Thm.~2.10]{BS19}, $\bl_\ell(\tpsi)$ covers $\bl_\ell(\hat\varphi)^{\tG_\psi}$.
For (i) or (iii), $\bl_\ell(\tpsi)$ covers a unique block of $\tG_\psi$.
For (ii),  Lemma~\ref{lem:covering} also implies $\bl_\ell(\hat\psi)=\bl_\ell(\hat\varphi)^{\tG_\psi}$.
This completes the proof.
\end{proof}

\section{Linear and unitary groups}\label{notation-linear-unitary-groups}

We will follow the notation in \cite{FLZ20} for linear and unitary groups.
Much of this notation originally comes from \cite{AF90, An92,An93, An94,FS82}.

Assume $q=p^f$ is a power of a prime $p$ and $n\ge 2$.
Let $\GL_n(q)$ be the  the group of all
invertible $n\times n$ matrices over $\mathbb F_q$.
Also we denote by $F_p$, $F_q$ and $\sigma_{it}$
the field automorphism, standard Frobenius endomorphism and graph automorphism respectively;
see the definitions for example in \cite[\S 2]{FLZ20}.
Recall that $\GL_n(-q)$ denotes the general unitary group
$$\GU_n(q)=\{~ A\in\GL_n(q^2) ~|~ (F_q(A))^tA=I_n ~\},$$
where $I_n$ is the identity matrix of degree $n$.
We will use the similar notation $\SL_n(-q)$ ($\PSL_n(-q)$) for  $\SU_n(q)$ ($\PSU_n(q)$). 
Let $\tG=\GL_n(\eta q)$ and
$G=\SL_n(\eta q)$ for $\eps=\pm 1$.
We define $D=\langle F_p,\sigma_{it}  \rangle$ if $n\ge 3$ while $D=\langle F_p  \rangle$ if $n=2$.
Then by~\cite[Thm. 2.5.1]{GLS98}, $(\tG\rtimes D)/Z(\tG)\cong \Aut(G)$.

We also recall the subset $\cF$ of the set of monic irreducible polynomials from~\cite[\S 1]{FS82}. 
Denote by $\Irr(\F_q[X])$ the set of all monic irreducible polynomials over the field $\F_q$.
For $\Delta(X)=X^m+a_{m-1}X^{m-1}+\cdots+a_0$ in $\F_{q^2}[X]$, we define $\tDelta(X)=X^ma_0^{-q}\Delta^q(X^{-1})$, where $\Delta^q(X)$ means the polynomial in $X$ whose coefficients are the $q$-th powers of the corresponding coefficients of $\Delta(X)$.
Now, we denote by
\begin{align*}
\cF_0 &= \left\{~ \Delta ~|~ \Delta\in\Irr(\F_q[X]),\Delta\neq X ~\right\},\\
\cF_1 &= \left\{~ \Delta ~|~ \Delta\in\Irr(\F_{q^2}[X]),\Delta\neq X,\Delta=\tDelta ~\right\},\\
\cF_2 &= \left\{~ \Delta\tDelta ~|~ \Delta\in\Irr(\F_{q^2}[X]),\Delta\neq X,\Delta\neq\tDelta ~\right\}
\end{align*}
and
$$\cF=\left\{ \begin{array}{ll} \cF_0 & \textrm{if}~\eta=1;\\ \cF_1\cup\cF_2 & \textrm{if}~\eta=-1. \end{array} \right.$$
We denote by $d_\Gamma$ the degree of any polynomial $\Gamma$.
For any semisimple element $s$ of $\tG$, we let $s=\prod_\Gamma s_\Gamma$ be its primary decomposition.
We denote by $m_\Gamma(s)$ the multiplicity of $\Gamma$ in $s_\Gamma$.
If $m_\Gamma(s)$  is not zero, we call $\Gamma$ an \emph{elementary divisor}.
Denote by $\cF'$ the subset of $\cF$ of those polynomials whose roots are of $\ell'$-order.
For $\Gamma \in \cF$, denote by  $e_\Gamma$ the multiplicative order of $(\eps q)^{d_\Gamma}$ modulo $\ell$. Also, we define $e$ to be the multiplicative order of $\eps q$ modulo $\ell$. Note that $e=e_\Gamma=1$ when $\ell=2$. 

Let $F_{\eta{q}}=\sigma_{it}^{\frac{1-\eta}{2}}F_q$.
Then $F_{\eta q}$ acts on $\barF_q^\times$ by $F_{\eta q} (\xi) = \xi^{\eta q}$.
A polynomial $\Gamma\in\cF$ can be identified with the set of roots of $\Gamma$, which can be again identified with an $F_{\eta q}$-orbit $\group{F_{\eta{q}}}\cdot\xi$ of this action, where $\xi$ is a root of $\Gamma$; see for example \cite[\S3.1]{De17}.
Let $\mathfrak Z=\{z\in \barF_q^\times\mid z^{q-\eta}=1\}$.\label{def-frakZ}
For any $z\in\mathfrak Z$ and $\Gamma\in\cF$, $z.\Gamma$ is defined to be the polynomial in $\cF$ whose roots are the roots of $\Gamma$ multiplied by $z$, defining an action of $\mathfrak Z$ on $\cF$.
Note that we can identify $Z(\tG)$ with $\mathfrak Z$.

\paragraph{}
For the representations of finite groups of Lie type, see for example \cite{CE04}.
Let $\tbG=\GL_n(\barF_p)$, $F=F_{\eta q}$, then $\tG=\tbG^F$.
If $\tbL$ is a Levi subgroup of a reductive group $\tbG$ with the Frobenius map $F$,
then by the fact that $Z(\tbG)$ is connected,
there is an isomorphism (see for example \cite[(8.19)]{CE04})
\begin{equation*}\label{linear-char-levi}
Z(\tbL)^F \to \Irr(\tbL^F/[\tbL^F,\tbL^F]),\quad z\mapsto\hz.
\end{equation*}
If $s$ is a semisimple element of $\tG$, then $C_{\tbG}(s)$ is a Levi subgroup of $\tbG$.

\section{The inductive blockwise Alperin weight conditions for simple groups of type $\mathsf A$}\label{sec:iBAW-A}

Given a semisimple element $s$ of $\tG=\GL_n(\eps q)$, let $\prod\limits_{\Gamma\in \mathcal{F}} s_{\Gamma}$ be the primary decomposition of $s$.
Here $s_\Gamma$ is conjugate to $m_\Gamma(s)(\Gamma)$, where $(\Gamma)$ is the  companion matrix of $\Gamma$.
Thus $n=\sum\limits_{\Gamma\in \mathcal{F}}m_\Gamma(s)d_\Gamma$.
Jordan decomposition gives a bijection between the irreducible characters of $\tG=\GL_n(\eps q)$ and the $\tG$-conjugacy classes of pairs $(s,\mu)$,
where $s=\prod\limits_{\Gamma\in\cF} m_\Gamma(s)(\Gamma)$ is a semisimple element of $\tG$ and $\mu=\prod\limits_{\Gamma\in\cF} \mu_\Gamma$ with $\mu_\Gamma\vdash m_\Gamma(s)$.
See for instance \cite[\S 1]{FS82} and \cite[Chap.~8]{CE04}.

The blocks of $\tG=\GL_n(\eps q)$ have been classified in \cite{FS82,Brou86}: the $\ell$-blocks of $\tG$ are in bijection with the set of $\tG$-conjugacy classes of pairs $(s,\lambda)$, where $s$ is a semisimple $\ell'$-element of $\tG$ and $\lambda=\prod_\Gamma \lambda_\Gamma$ with $\lambda_\Gamma$ the $e_\Gamma$-core of a partition of $m_\Gamma(s)$. Recall that $e_\Gamma$ is the multiplicative order of $(\eps q)^{d_\Gamma}$ modulo $\ell$.
Note that, for $\ell=2$, $(s,\lambda)$ is always $(s,-)$ (here, $-$ denotes the empty partition), which means that $\mathcal E_2(\tG,s)$ is a single 2-block of $\tG$.
Let $\tB$ be an $\ell$-block of $\tG$ corresponding to $(s,\lambda)$. 
Then an irreducible $\ell$-Brauer character with labeling $(s',\lambda')\in i \IBr(\tG)$ (see~\cite{LZ18} for the notation) is in the block $\tB$ if and only if  $s'$ is $\tG$-conjugate to $s$ and $\lambda'_\Gamma$ has $e_\Gamma$-core $\lambda_\Gamma$ for every $\Gamma$.

On the other hand, the $\tB$-weights  are classified in \cite{AF90, An92, An93, An94} and we will use the explicit labelling $(s,\lambda,K)\in i\Alp(\tG)$ from 
 \cite[\S 3]{FLZ20}.
 In addition, an $\ell$-weight with labeling $(s',\lambda',K')$ is in the block $\tB$ with label $(s,\lambda)$ if and only if 
$(s',\lambda')$ is $\tG$-conjugate to $(s,\lambda)$.

The first consequence of the results in \cite{FLZ20} is that
Theorem \ref{awc-slsu} holds.

\begin{proof}[Proof of Theorem \ref{awc-slsu}]
	Thanks to \cite{Ca88},
	we only need to consider the non-defining characteristic.	
	For $G=\SL_n(\eps q)$ and $\tG=\GL_n(\eps q)$, 
	we let $\tilde B$ be an $\ell$-block of $\tG$ and $\mathcal B$ the union of $\ell$-blocks of $G$ covered by $\tB$.
	By the correspondence between $\IBr_\ell(\tilde B)$ and $\Alp_\ell(\tilde B)$ in \cite{AF90,An92,An93,An94}, the proof of the main theorem of \cite{FLZ20}
	indeed obtained that $|\IBr_\ell(\mathcal B)|=|\Alp_\ell(\mathcal B)|$, which implies that $|\IBr_\ell(B)|=|\Alp_\ell(B)|$ for every $B\in\mathcal B$
	 immediately since the blocks in $\mathcal B$ are $\tG$-conjugate.
\end{proof}

Now we consider the (iBAW) condition for the blocks of $G=\SL_n(\eta q)$.

\begin{thm}\label{iBAW-typeA}
Assume that $G=\SL_n(\eps q)$ is the universal covering of the finite simple group $S=\PSL_n(\eps q)$.
	Let $\ell$ be a prime not dividing $q$, $\tG=\GL_n(\eps q)$ and $\mathcal B$ be a $\tG$-orbit of $\ell$-blocks of $G$.
If furthermore the condition (v) of  Theorem \ref{thm:criterion-block} holds for the bijection $\tilde\Omega$ in \cite[\S 6]{FLZ20},  then the  (iBAW)  condition holds for any $B\in \mathcal B$.
\end{thm}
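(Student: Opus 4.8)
The plan is to verify that the hypotheses of Theorem~\ref{thm:criterion-block} are all met for $G=\SL_n(\eps q)$, $\tG=\GL_n(\eps q)$, $D=\group{F_p,\sigma_{it}}$ (or $\group{F_p}$ when $n=2$), and a $\tG$-orbit $\mathcal B$ of $\ell$-blocks of $G$; once conditions (i)--(v) hold, the theorem delivers the (iBAW) condition for every $B\in\mathcal B$. Since condition (v) is assumed in the statement via the bijection $\tilde\Omega$ of \cite[\S 6]{FLZ20}, the work is to check (i)--(iv). First I would record that (i)(a), (i)(b) are standard structural facts: $[\tG,\tG]=\SL_n(\eps q)=G$, $D$ is abelian, and by \cite[Thm.~2.5.1]{GLS98} (cited above) $C_{\tG D}(G)=Z(\tG)$ with $\tG D/Z(\tG)\cong\Aut(G)$. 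For (i)(c) and (i)(d) I would invoke the extendibility results already established in \cite{FLZ20} (which in turn rest on \cite{CE04,De17} and the Clifford theory of $\GL_n$ over $\SL_n$): every $\ell$-Brauer character of $\tG$ restricts to a (multiplicity-free) sum of $\tG_{\psi_0}$-conjugates and extends to its stabilizer, and the analogous statement holds for $\ell$-defect-zero characters of $N_G(R)/R$ inside $N_{\tG}(R)/R$, using the description of radical subgroups and their normalizers in $\SL_n(\eps q)$ from \cite[\S 3--4]{FLZ20}.

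Next I would address (ii): the existence of a $\Lin_{\ell'}(\tG/G)\rtimes D_{\tilde{\mathcal B}}$-equivariant bijection $\tilde\Omega\colon\IBr_\ell(\tilde{\mathcal B})\to\Alp_\ell(\tilde{\mathcal B})$ with the three compatibility properties (a)--(c). The bijection is exactly the one constructed in \cite[\S 6]{FLZ20} from the labelling sets $i\IBr(\tG)$ and $i\Alp(\tG)$: a Brauer character labelled $(s',\lambda')$ is sent to a weight labelled $(s',\lambda',K')$ with the same semisimple part and the same core data, and the block statements recalled in the paragraphs just before this theorem show that $\tilde\Omega$ restricts to a bijection between $\IBr_\ell(\tilde B)$ and $\Alp_\ell(\tilde B)$ for each individual block $\tilde B$, giving (ii)(c); equivariance under $\Lin_{\ell'}(\tG/G)$ (twisting the semisimple label by $\mathfrak Z$) and under $D_{\tilde{\mathcal B}}$ is already part of the main result of \cite{FLZ20}, and (ii)(a) is the compatibility with central characters $\nu\in\Lin_{\ell'}(Z(\tG))$, again built into the construction. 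Property (ii)(b), the equality $\J_G(\tilde\psi)=\J_G(\tilde\Omega(\tilde\psi))$ of the ``relative'' stabilizer invariants of \cite[\S 2]{BS19}, is the point where one must compare the $\tG\rtimes D$-stabilizer of $\tilde\psi$ with that of $\tilde\Omega(\tilde\psi)$; here I would use that both stabilizers are governed by the same combinatorial data (the $D$-action on polynomials $\Gamma\in\cF$ and on partitions) so that the stabilizers in $\tG\rtimes D$ of corresponding objects coincide, and the constituent-structure over $G$ matches as well.

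For (iii) and (iv) I would choose the distinguished constituents $\psi_0\in\IBr_\ell(G\mid\tilde\psi)$ and $\varphi_0$ of a weight of $G$ covered by $(\tilde R,\tilde\varphi)$ using the same recipe as in \cite{FLZ20}: pick the constituent lying under a fixed linear character of $\tG/G$, so that by the Clifford theory of $\tG$ over $G$ one has $(\tG\rtimes D)_{\psi_0}=\tG_{\psi_0}\rtimes D_{\psi_0}$ and $(\tG D)_{R,\varphi_0}=\tG_{R,\varphi_0}(GD)_{R,\varphi_0}$ (these ``stabilizer splitting'' statements are precisely what was verified in \cite{FLZ20} en route to the block-free (iAW) condition), and extendibility of $\psi_0$ to $G\rtimes D_{\psi_0}$ and of $\varphi_0$ to $(G\rtimes D)_{R,\varphi_0}$ follows from the same extension arguments for $\SL_n(\eps q)$ and its local subgroups. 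I expect the main obstacle to be property (ii)(b) together with making the choices in (iii)--(iv) and the block-matching in (v) simultaneously consistent: one must be sure that the constituent $\psi_0$ used to split the stabilizer and the weight-constituent $\varphi_0$ sit in a common block of $G$ exactly when $\overline{(\tilde R,\tilde\varphi)}=\tilde\Omega(\tilde\psi)$, which is what reduces the whole theorem to the single remaining condition (v); once that reduction is made explicit, the proof is a bookkeeping assembly of the results of \cite{FLZ20} and \cite{BS19} through Theorem~\ref{thm:criterion-block}.
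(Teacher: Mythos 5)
Your proposal is correct and follows essentially the same route as the paper: the paper's proof simply notes that the block-labelling compatibility recalled just before the theorem shows $\tilde\Omega$ satisfies (ii)(c), cites the proof of the main theorem of \cite{FLZ20} for conditions (i)--(iv) of Theorem~\ref{thm:criterion-block}, and then concludes from the assumed condition (v). Your write-up merely unpacks in more detail what those citations to \cite{FLZ20} are delivering.
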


\begin{proof}
By the above observations, the bijection $\tilde\Omega$ used in \cite{FLZ20} perserves blocks,
i.e., condition (ii)(c) of Theorem \ref{thm:criterion-block} holds.
Thus by the proof of the main theorem of \cite{FLZ20},
conditions (i)--(iv)  of Theorem \ref{thm:criterion-block} hold.
From this, 
if condition (v) of  Theorem \ref{thm:criterion-block} holds,
then the  (iBAW)  condition holds for any $B\in \mathcal B$.
\end{proof}

\begin{rem}
	Keep the notation of Theorem  \ref{iBAW-typeA}, we know from its proof  that if (\ref{equ:block-corr}) holds and there is a
$(\tG \rtimes D)_{\mathcal B}$-equivariant bijection between $\IBr_\ell(\mathcal B)$ and $\Alp_\ell(\mathcal B)$ which preserves blocks, then the   (iBAW)  condition from holds for every block $B\in\mathcal B$.
\end{rem}

Now we consider the  (iBAW)  condition for certain blocks of groups of 
type $\mathsf A$.

\begin{thm}\label{iBAW-SLSU-some-special-case}
Let $G=\SL_n(\eps q)$, $\tG=\GL_n(\eps q)$ and $\ell$ a prime not dividing $q$.
Let $\mathcal B$ be a $\tG$-orbit of $\ell$-blocks of $G$.
If (\ref{equ:block-corr}) holds,
then the  (iBAW)  condition holds for every $B\in\mathcal B$ if one of the following is satisfied.
\begin{enumerate}[\rm(i)]
\item $\tG_B=\tG$ for $B\in\mathcal B$.
\item If $\psi\in \IBr_\ell(\mathcal B)$ satisfies that
$(\tG\rtimes D)_\psi=\tG_\psi\rtimes D_\psi$, then for any $B\in \mathcal B$ and any
$g\in \tG\setminus \tG_B$, 
there exists $g_0\in g \tG_B$ such that either $[\langle \tG_\psi,g_0 \rangle, D_\psi]\subseteq \tG_\psi$ or $\tG_\psi [\langle \tG_\psi,g_0 \rangle, D_\psi]\nsubseteq\tG_B$.
\item If $\psi\in \IBr_\ell(\mathcal B)$ satisfies that
$(\tG\rtimes D)_\psi=\tG_\psi\rtimes D_\psi$, then for any $B\in \mathcal B$ and any
$g\in \tG\setminus \tG_B$, 
there exists $g_0\in g \tG_B$ such that $\langle \tG_\psi,g_0 \rangle\cap\tG_B=\tG_\psi$.
\item $\tG(\tG\rtimes D)_B/GZ(\tG)$ is abelian.
\item $\gcd(|\tG:\tG_B|,|\tG_B:\tG_\psi|)=1$ for any $B\in \mathcal B$ and $\psi\in\IBr_\ell(\mathcal B)$.
\item $\tG_B=\tG_\psi$, for any $B\in \mathcal B$ and $\psi\in\IBr_\ell(\mathcal B)$.
\item $\tG_B=GZ(\tG)$ for $B\in\mathcal B$.
\end{enumerate}
\end{thm}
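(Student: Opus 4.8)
The plan is to verify, in each case (i)–(vii), that condition (v) of Theorem~\ref{thm:criterion-block} holds for the bijection $\tOmega$ of \cite[\S 6]{FLZ20}, so that Theorem~\ref{iBAW-SLSU-some-special-case} follows at once from Theorem~\ref{iBAW-typeA}. In fact, all seven hypotheses are designed so that condition~(v) becomes vacuous or trivially satisfiable: they either force $\mathcal B$ to consist of a single block (so that ``the same block $B\in\mathcal B$'' is automatic once $\psi_0$ and $(R,\varphi_0)$ are chosen covered by $\tpsi$ resp.\ $(\tR,\tvarphi)$), or they force the relevant stabilizers to coincide so that the block constituents of a $\tG$-block restricted to $G$ are permuted transitively by a group already controlled by the choices in (iii) and (iv). So the first step is to recall that conditions (i)--(iv) of Theorem~\ref{thm:criterion-block} are already established in \cite{FLZ20} (via Theorem~\ref{iBAW-typeA} and its proof), together with the explicit combinatorial labellings $(s,\lambda)\in i\IBr(\tG)$, $(s,\lambda,K)\in i\Alp(\tG)$ and the block labels $(s,\lambda)$ described above; the $\tG$-action on $G$-blocks, on $\IBr_\ell(\mathcal B)$ and on $\Alp_\ell(\mathcal B)$ is then completely explicit.

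\textbf{The easy cases.} Cases (i) and (vii) (and more generally whenever $\tG_B$ is fixed, forcing $|\mathcal B|\le |\tG:GZ(\tG)|$ but with all blocks of $\mathcal B$ lying over the \emph{same} $\tG$-orbit data) reduce to showing $\mathcal B$ is a single $G$-block, or that Clifford theory over $\tG/G$ makes the block assignment unique: if $\tG_B=\tG$ then $\tB$ covers a unique $G$-block, so $\mathcal B=\{B\}$ and (v) is empty. Case~(iv) says $\tG(\tG\rtimes D)_B/GZ(\tG)$ is abelian; here I would invoke \cite[Thm.~4.2]{BS19} directly (its hypotheses on the relevant quotient being abelian are met after base change to $\tG_{\mathcal B}$), or equivalently use that condition~(v) of Theorem~\ref{thm:criterion-block} collapses to condition~(v) of \cite[Thm.~4.2]{BS19} in the abelian setting. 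Cases (v) and (vi) are the ``stabilizers agree'' cases: if $\tG_B=\tG_\psi$ for all $\psi$, then for a fixed $\tpsi\in\IBr_\ell(\tilde B)$ the set $\IBr_\ell(G\mid\tpsi)\cap\IBr_\ell(\mathcal B)$ and the set of weights over $(\tR,\tvarphi)$ in $\mathcal B$ are single $\tG$-orbits identified by $\tOmega$ compatibly with the block labels, so the $\psi_0$ of (iii) and the $(R,\varphi_0)$ of (iv) automatically land in one common block. Case~(v), $\gcd(|\tG:\tG_B|,|\tG_B:\tG_\psi|)=1$, is the standard coprimeness trick: writing the cyclic group $\tG/GZ(\tG)$ as a product according to this coprime factorisation, one can choose $\psi_0$ and $(R,\varphi_0)$ whose $\tG$-orbit of blocks meets each coset of $\tG_B$ exactly in the way dictated by $\tOmega$, so that the block containing $\psi_0$ is forced to equal the block containing $(R,\varphi_0)$.

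\textbf{The main obstacle: cases (ii) and (iii).} These are the genuinely new reformulations and will carry the weight of the argument. I expect (iii) to follow from (ii), since $\langle \tG_\psi,g_0\rangle\cap\tG_B=\tG_\psi$ is a special case of the disjunction in (ii) (it gives $[\langle\tG_\psi,g_0\rangle,D_\psi]\subseteq\langle\tG_\psi,g_0\rangle$, whence either the commutator lies in $\tG_\psi$ or it escapes $\tG_B$). For (ii) itself, I would fix $\tpsi$ with $(\tG\rtimes D)_{\tpsi}=\tG_{\tpsi}\rtimes D_{\tpsi}$ (available by condition~(iii)(a) of Theorem~\ref{thm:criterion-block}, which the proof of \cite{FLZ20} verifies) and let $(\tR,\tvarphi)=\tOmega(\tpsi)$; then for each $B\in\mathcal B$ I must produce $\psi_0\in\IBr_\ell(G\mid\tpsi)$ and $(R,\varphi_0)$ over $(\tR,\tvarphi)$ lying in $B$, \emph{or} show $B$ receives no constituent. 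The hypothesis on $g_0\in g\tG_B$ is exactly what is needed to run the argument in \cite[Thm.~4.2]{BS19}: the first alternative $[\langle\tG_\psi,g_0\rangle,D_\psi]\subseteq\tG_\psi$ lets the extension of $\psi_0$ to $G\rtimes D_{\psi_0}$ be chosen $g_0$-stably, while the second alternative $\tG_\psi[\langle\tG_\psi,g_0\rangle,D_\psi]\nsubseteq\tG_B$ forces $g_0$ to move $B$, eliminating it from consideration. Assembling these choices $g_0$ as $g$ ranges over coset representatives of $\tG_B$ in $\tG$, one gets a single $B$-compatible pair $(\psi_0,(R,\varphi_0))$. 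The delicate point I will have to check carefully is the \emph{equivariance and compatibility} with the fixed bijection $\tOmega$ of \cite{FLZ20}: I must ensure that the $\psi_0$ extracted from $\tpsi$ and the $(R,\varphi_0)$ extracted from $(\tR,\tvarphi)=\tOmega(\tpsi)$ have equal stabilizers in $\tG\rtimes D$ and that $\Omega(\psi_0)=\overline{(R,\varphi_0)}$ under the induced bijection, exactly as in the proof of Theorem~\ref{thm:criterion-block} above — this is where the explicit labels of \cite[\S\S 3, 6]{FLZ20} and the transitivity of the $\tG$-action on fibres over $\tpsi$ do the real work.
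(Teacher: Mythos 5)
Your overall skeleton matches the paper's: reduce everything to verifying condition (v) of Theorem~\ref{thm:criterion-block} via Theorem~\ref{iBAW-typeA}, note that (i) is immediate, and observe that case (iii) reduces to case (ii) because $[\langle \tG_\psi,g_0\rangle,D_\psi]\le\langle\tG_\psi,g_0\rangle$ (all subgroups of $\tG$ containing $G$ being $D$-stable). However, in the central case (ii) your sketch is missing the actual mechanism, and your reading of the disjunction is off. The paper's argument runs as follows: take $\psi$ and $(R,\varphi)$ satisfying conditions (iii) and (iv) of the criterion, lying in blocks $B$ and $B^g$ respectively with $g\notin\tG_B$. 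One first \emph{proves} that $\tG_\psi[\langle\tG_\psi,g'\rangle,D_\psi]\subseteq\tG_B$ for \emph{every} $g'\in g\tG_B$, using that $(\tG\rtimes D)_\psi=(\tG\rtimes D)_{\overline{(R,\varphi)}}\le(\tG\rtimes D)_{B^{g}}$ (so $D_\psi$ stabilizes $B^{g'}$), that $(\tG\rtimes D)_{\psi^{g'}}=\tG_\psi D_\psi^{g'}\le(\tG\rtimes D)_{B^{g'}}$, and the identity $\sigma^{g'}=\sigma[\sigma,g']$. Hence the second alternative of hypothesis (ii) can never occur, so the hypothesis forces the first alternative for some $g_0$; this in turn yields $(\tG\rtimes D)_{\psi^{g_0}}=\tG_\psi D_\psi^{g_0}\le\tG_\psi D_\psi$, i.e.\ $\psi^{g_0}$ again has a split stabilizer. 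One then \emph{replaces $\psi$ by $\psi^{g_0}$}, which lies in $B^{g_0}=B^g$, the same block as the weight, and condition (v) is verified. Your description (``the first alternative lets the extension be chosen $g_0$-stably, the second eliminates $B$ from consideration'') does not capture this: the second alternative is shown to be impossible, and the conjugation $\psi\mapsto\psi^{g_0}$ is the step that actually produces a Brauer character and a weight in a common block.

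There are also problems in the remaining cases. Case (vii), $\tG_B=GZ(\tG)$, is the \emph{maximal}-orbit situation, not a single-block one, so grouping it with (i) is wrong; it follows because $GZ(\tG)\le\tG_\psi\le\tG_B$ forces $\tG_\psi=\tG_B$, i.e.\ hypothesis (vi), which in turn gives hypothesis (iii) since $\langle\tG_\psi,g_0\rangle\cap\tG_B=\langle\tG_B,g_0\rangle\cap\tG_B=\tG_B=\tG_\psi$ in the cyclic lattice above $G$. Case (iv) is handled in the paper not by re-invoking \cite[Thm.~4.2]{BS19} but by reducing to (ii): if $\tG(\tG\rtimes D)_B/GZ(\tG)$ is abelian then $[\langle\tG_\psi,g_0\rangle,D_\psi]\le GZ(\tG)\le\tG_\psi$, so the first alternative of (ii) holds outright. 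Your coprimality argument for (v) points in the right direction (hypothesis (v) implies hypothesis (iii) by choosing $g_0$ in the cyclic group $\tG/G$ with order prime to $|\tG_B:\tG_\psi|$), but as written none of (iv)--(vii) is actually proved.
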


\begin{proof}
We use Theorem \ref{iBAW-typeA}, and then (i) follows immediately.
For (ii),
we assume that $\tG_B<\tG$ for $B\in\mathcal B$.
Let $\psi$, $(R,\varphi)$ satisfy condition (iii) and (iv) of Theorem \ref{thm:criterion-block}
respectively and let $B\in\mathcal B$ and $g\in \tG$ such that
$\psi\in\IBr_\ell(B)$ and $\overline{(R,\varphi)}\in\Alp_\ell(B^g)$.
If $B^g=B$, then the assertion holds by Theorem \ref{iBAW-typeA}.
Thus we assume that $g\notin\tG_B$.
We claim that $\tG_{\psi} [\langle \tG_{\psi},g' \rangle, D_{\psi}]\subseteq\tG_B$ for any $g'\in g\tG_B$.
Obviously, $B^g=B^{g'}$.
By the construction in  \cite{BS19}, $(\tG\rtimes D)_{\psi}=(\tG\rtimes D)_{\overline{(R,\varphi)}}$.
Since $\overline{(R,\varphi)}\in\Alp_\ell(B^g)$,
we know $\tG_{\psi}\rtimes D_{\psi} = (\tG\rtimes D)_{\overline{(R,\varphi)}} \le (\tG\rtimes D)_{B^{g}}$.
Since $\tG/G$ is cyclic, any subgroup of $\tG$ containing $G$ is normal in $\tG$.
Also  $(\tG\rtimes D)_{\psi^{g'}}=\tG_{\psi}D_{\psi}^{g'}\le(\tG\rtimes D)_{B^{g'}}$.
For $\sigma\in D_{\psi}$, we have $\sigma^{g'}=\sigma [\sigma, g']$.
Thus $[\langle \tG_{\psi},g' \rangle, D_{\psi}]\subseteq (\tG\rtimes D)_{B^{g'}}$ because $D_{\psi}\subseteq (\tG\rtimes D)_{B^{g'}}$.
So $\tG_{\psi} [\langle \tG_{\psi},g' \rangle, D_{\psi}]\subseteq\tG_{B^{g'}}=\tG_B$ and the claim holds.

Therefore, $\tG_{\psi} [\langle \tG_{\psi},g_0 \rangle, D_{\psi}]\subseteq\tG_{\psi}$ for some $g_0\in g \tG_B$
by the hypothesis.
Then $(\tG\rtimes D)_{\psi^{g_0}}=\tG_{\psi} D_{\psi}^{g_0}\le \tG_{\psi} D_{\psi}$.
From this we have  $(\tG\rtimes D)_{\psi^{g_0}}=\tG_{\psi^{g_0}}\rtimes D_{\psi^{g_0}}$.
Thus both the Brauer character $\psi^{g_0}$ and the weight $(R,\varphi)$ lie in the block $B^{g_0}=B^g$ and  satisfy conditions (iii) and (iv) of Theorem \ref{thm:criterion-block}.
So the assertion follows by Theorem \ref{iBAW-typeA}.

Now we consider (iii). 
Note that $[\langle \tG_{\psi},g' \rangle, D_{\psi}]\le\langle \tG_{\psi},g' \rangle$.
By the hypothesis, for any $B\in \mathcal B$ and any
$g\in \tG\setminus \tG_B$, 
there exists $g_0\in g \tG_B$ such that $\langle \tG_\psi,g_0 \rangle\cap\tG_B=\tG_\psi$.
So either $[\langle \tG_\psi,g_0 \rangle, D_\psi]\subseteq \tG_\psi$ or $\tG_\psi [\langle \tG_\psi,g_0 \rangle, D_\psi]\nsubseteq\tG_B$.
For the rest, we mention that the implications (ii) $\Rightarrow$ (iv) and
(iii) $\Rightarrow$ (v) $\Rightarrow$ (vi) $\Rightarrow$ (vii) are direct
and we complete the proof.
\end{proof}

\begin{proof}[Proof of Theorem \ref{uni-maxdef}]
When $B$ is a unipotent block, we have $\tG_B=\tG$ by  \cite[Remark~4.13]{Feng19}, where $\tG=\GL_n(\eps q)$.
Thus the assertion follows from Proposition~\ref{prop:(2.3)} (ii) and
Theorem~\ref{iBAW-SLSU-some-special-case}~(i).

If $B$ is of maximal defect, then by \cite[Prop.~5.4]{CS15}, one has that Proposition~\ref{prop:(2.3)} (i) is satisfied. So the assertion follows from Theorem~\ref{iBAW-SLSU-some-special-case}~(i).
\end{proof}

Let $S\in\{\PSL_n(q),\PSU_n(q)\}$ be a simple group of type $\mathsf A$ and $G$ be the universal covering group of $S$.
We first consider the exceptional covering cases; see
\cite[Table~6.1.3]{GLS98} for the list of $S$.
Note that the (iBAW) condition has been verified
for the alternalting groups in \cite{Ma14},
for simple groups of Lie type in defining characteristic in \cite{Sp13},
and for cyclic blocks in \cite{KS16a,KS16b}.
Then by a similar argument in \cite[\S 8]{FLZ20},
the only prime we need to consider for the  simple group $\PSL_3(4)$, $\PSU_4(3)$, $\PSU_6(2)$ is just $3$, $2$, $3$, respectively.
These cases are settled in
\cite{Du19, Du20}. We mention that
the paper \cite{Du20} dealt with the the blocks of the universal covering groups of $\PSU_4(3)$ and $\PSU_6(2)$ which dominate no block of $\SU_4(3)$ and $\SU_6(2)$, while the blocks of special unitary groups are considered in this paper.

\begin{proof}[Proof of Theorem \ref{ibaw-nq-1-prime}]
By the above arguments, we may assume that $G=\SL_n(\eps q)$ is the universal covering group of $S$.
According to Proposition~\ref{prop:(2.3)} (iii), the block correspondence property (\ref{equ:block-corr}) holds.
For an  $\ell$-block $B$ of $G$, we have $\ell\nmid |\tG:\tG_B|$, where $\tG=\GL_n(\eps q)$.
On the other hand $|\tG:\tG_\psi|_{\ell'}$ is a product of pairwise distinct primes.
Thus $\gcd(|\tG:\tG_B|,|\tG_B:\tG_\psi|)=1$ and the assertion follows by  Theorem~\ref{iBAW-SLSU-some-special-case}~(v).
\end{proof}

\begin{cor}\label{ibaw-n-prime}
Assume that $n$ is square-free.
Then the  (iBAW)  condition holds for the simple group $S=\PSL_n(\eps q)$. 	
\end{cor}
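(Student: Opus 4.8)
The plan is to derive Corollary \ref{ibaw-n-prime} directly from Theorem \ref{ibaw-nq-1-prime}. First I would observe that if $n$ is square-free, then any divisor of $n$ is square-free; in particular $\gcd(n,q-\eps)$ is square-free, and hence $\gcd(n,q-\eps)_{\ell'}$ is square-free for every prime $\ell$. This is exactly the hypothesis of Theorem \ref{ibaw-nq-1-prime}, so that theorem applies for every prime $\ell\nmid q$ and yields the (iBAW) condition for $S=\PSL_n(\eps q)$ at all such primes.

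It then remains to handle the defining-characteristic case, i.e. the prime $\ell=p$ where $q=p^f$. For this I would invoke the result of Sp\"ath \cite{Sp13}, which establishes the (iBAW) condition for all finite simple groups of Lie type in their defining characteristic; this is already cited in the discussion preceding the proof of Theorem \ref{ibaw-nq-1-prime}. Combining the two cases, the (iBAW) condition holds for $S$ and every prime $\ell$ dividing $|S|$, which by the convention fixed at the end of Section \ref{criterion-iBAW} means the (iBAW) condition holds for the simple group $S$.

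I do not expect any genuine obstacle here: the corollary is a clean specialization of Theorem \ref{ibaw-nq-1-prime} once one notes that square-freeness of $n$ passes to the relevant gcd. The only point requiring a word of care is the degenerate small cases (for instance when $\gcd(n,q-\eps)=1$, so that $S$ has trivial Schur multiplier), but these are already covered by \cite{LZ18,LZ19} and a fortiori by Theorem \ref{ibaw-nq-1-prime}, so no separate treatment is needed. Thus the proof is essentially two sentences: reduce $\gcd(n,q-\eps)_{\ell'}$ square-free to $n$ square-free, apply Theorem \ref{ibaw-nq-1-prime} for $\ell\nmid q$, and quote \cite{Sp13} for $\ell=p$.
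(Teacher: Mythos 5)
Your proposal is correct and matches the paper's proof exactly: the paper also cites \cite[Thm.~C]{Sp13} to dispose of the defining-characteristic case and then applies Theorem \ref{ibaw-nq-1-prime}, the only observation needed being that square-freeness of $n$ passes to $\gcd(n,q-\eps)_{\ell'}$.
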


\begin{proof}
By  \cite[Thm.~C]{Sp13}, we only need to the consider the non-defining characteristic case, which  follows from Theorem \ref{ibaw-nq-1-prime} immediately.
\end{proof}

Now we consider the simple groups of type $\mathsf A$ of small rank.
We first have the following.

\begin{lem}\label{ibaw-n-4}
The  (iBAW)  condition holds for $S=\PSL_4(\eps q)$ and any prime.
\end{lem}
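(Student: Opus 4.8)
The plan is to reduce $S=\PSL_4(\eps q)$ to the cases already handled by the general machinery, and to dispose of the remaining small set of exceptions by hand. First I would invoke Theorem~\ref{ibaw-nq-1-prime}: since $n=4$, the relevant gcd is $\gcd(4,q-\eps)_{\ell'}$, which is square-free unless $\ell\ne 2$ is impossible (the only prime dividing $4$ is $2$) — more precisely, $\gcd(4,q-\eps)_{\ell'}$ fails to be square-free only when $\ell$ is odd and $16\mid q-\eps$ is irrelevant; what actually matters is that $\gcd(4,q-\eps)\in\{1,2,4\}$, so $\gcd(4,q-\eps)_{\ell'}$ is square-free for every odd $\ell$ automatically, and for $\ell=2$ it is square-free precisely when $4\nmid q-\eps$. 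Hence Theorem~\ref{ibaw-nq-1-prime} already gives the (iBAW) condition for $S$ at all odd primes, and at $\ell=2$ whenever $4\nmid q-\eps$. By \cite[Thm.~C]{Sp13} (or \cite{Sp13} more generally) the defining-characteristic prime $\ell=p$ is also done, so I may assume $\ell=2\ne p$ and $4\mid q-\eps$.

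In the remaining situation $\ell=2$, the block theory simplifies dramatically: as recalled in Section~\ref{sec:iBAW-A}, for $\ell=2$ every Lusztig series $\mathcal E_2(\tG,s)$ is a single $2$-block of $\tG=\GL_4(\eps q)$, labelled by a semisimple $2'$-element $s$ with $(s,\lambda)=(s,-)$. So I would run through the possible primary decompositions of a semisimple $2'$-element $s\in\GL_4(\eps q)$ and, for each, determine the inertia group $\tG_B$ and compare it with $\tG_\psi$ for $\psi\in\IBr_2(\mathcal B)$. The point is to check that one of the alternatives in Theorem~\ref{iBAW-SLSU-some-special-case} holds. Concretely: when $s$ is central (the unipotent block) $\tG_B=\tG$ by Theorem~\ref{uni-maxdef}, so (i) applies; when $s$ has several distinct elementary divisors the centralizer $C_{\tbG}(s)$ is a proper Levi, and one computes that $\tG_B$ and $\tG_\psi$ are controlled by the same combinatorial data coming from the action of $Z(\tG)\cong\mathfrak Z$ on the elementary divisors, so that typically $\tG_B=\tG_\psi$ (case (vi)) or at least $\gcd(|\tG:\tG_B|,|\tG_B:\tG_\psi|)=1$ (case (v)). The degree-$4$ constraint keeps the number of cases tiny: the partitions of $4$ into the multiplicities $m_\Gamma(s)d_\Gamma$ are very few, and $|\tG:\tG_B|$ divides $\gcd(4,q-\eps)\le 4$.

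The main obstacle I anticipate is the genuinely ambiguous case where $\tG_B$ properly contains $\tG_\psi$ and neither is all of $\tG$ — this can only happen when $\gcd(4,q-\eps)=4$ and $s$ has elementary divisors permuted nontrivially by $\mathfrak Z$ in a way that splits the block more finely than the Brauer characters. In that case I would fall back on the delicate criteria (ii) or (iii) of Theorem~\ref{iBAW-SLSU-some-special-case}, i.e.\ exhibit for each $g\in\tG\setminus\tG_B$ a representative $g_0\in g\tG_B$ with $\langle\tG_\psi,g_0\rangle\cap\tG_B=\tG_\psi$; since $\tG/GZ(\tG)$ is cyclic of order dividing $4$ here, this is a concrete finite check about subgroups of a cyclic group and the commutator action of the abelian group $D$, which should go through. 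Finally, I would double-check that the exceptional covering group of $\PSL_4(\eps q)$ (none in this rank, since $\PSL_4(2)\cong A_8$ is handled by \cite{Ma14} and there is no other exceptional multiplier for $n=4$) requires no separate treatment, completing the proof.
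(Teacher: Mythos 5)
There is a genuine gap, and it sits at the very first step: your computation of $\gcd(4,q-\eps)_{\ell'}$ is backwards. The subscript $\ell'$ denotes the $\ell'$-part, and $\gcd(4,q-\eps)$ is a power of $2$. Hence for $\ell=2$ the quantity $\gcd(4,q-\eps)_{2'}$ equals $1$ and is \emph{always} square-free, so Theorem~\ref{ibaw-nq-1-prime} disposes of $\ell=2$ unconditionally (alternatively, for $\ell=2$ one has $\tG_B=\tG$ for every $2$-block $B$ of $G$, since the number of $2$-blocks of $G$ covered by a $2$-block of $\tG$ is odd while $|\tG:\tG_B|$ divides $|\tG/GZ(\tG)|$, a power of $2$; then Theorem~\ref{iBAW-SLSU-some-special-case}(i) applies). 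For odd $\ell$, by contrast, $\gcd(4,q-\eps)_{\ell'}=\gcd(4,q-\eps)$, which fails to be square-free exactly when $4\mid q-\eps$. So the case left open after Theorem~\ref{ibaw-nq-1-prime} is $\ell$ \emph{odd} with $4\mid q-\eps$ --- precisely the opposite of the case you go on to treat.

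As a consequence, the bulk of your argument analyses a situation ($\ell=2$, $4\mid q-\eps$, using that $\mathcal E_2(\tG,s)$ is a single $2$-block) that is already settled, while the genuinely hard case is never addressed. For odd $\ell$ with $4\mid q-\eps$ one must still: (a) when $\ell\nmid q-\eps$, observe that the Sylow $\ell$-subgroups of $G$ are cyclic if $e>2$ and that every block is cyclic or of maximal defect if $e=2$, invoking \cite{KS16a} and Theorem~\ref{uni-maxdef}; and (b) when $\ell\mid q-\eps$, reduce to the configuration $|\tG:\tG_B|=2$, $|\tG:\tG_\psi|=4$ and run through the elementary divisors of the semisimple $\ell'$-element $s$ labelling $\tilde B=\mathcal E_\ell(\tG,s)$: a divisor of degree $4$ or two of degree $2$ force $B$ to be a cyclic block (using the defect groups from \cite{FS82} and the determinants of radical subgroups from \cite{FLZ20}), while a divisor of degree $1$ forces $\tilde B$ to cover a unique block of $G$ by \cite[Remark~4.13]{Feng19}, contradicting $|\tG:\tG_B|=2$. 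None of these closing arguments appears in your proposal, and your fallback on criteria (ii)/(iii) of Theorem~\ref{iBAW-SLSU-some-special-case} is not substantiated for this configuration. (A minor further point: $\PSU_4(2)$ and $\PSU_4(3)$ do have exceptional Schur multipliers, so the claim that $n=4$ has no exceptional covers beyond $\PSL_4(2)\cong A_8$ is inaccurate; these are dealt with by the discussion preceding the lemma in the paper.)
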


\begin{proof}
As above, we assume that $G=\SL_4(\eps q)$ is the universal covering group of $S$.
Let $\tG=\GL_4(\eps q)$.
Also by \cite[Thm.~ C]{Sp13}
 we assume that $\ell\nmid q$.
If $\ell=2$, then $\tG_B=\tG$ for every $2$-block $B$ of $G$ and then the (iBAW) condition holds by  Theorem \ref{ibaw-nq-1-prime}.

Assume that $\ell$ is odd.
By Theorem \ref{ibaw-nq-1-prime},
we only need to consider the case $4\mid q-\eps$.
If $\ell\nmid q-\eps$,  then
we can check directly that the Sylow $\ell$-subgroups of $G$ are cyclic if $e>2$.
In addition, if $e=2$, then $G$ has an abelian defect group,
and  any $\ell$-block of $G$ is either a cyclic block or of maximal defect.
Recall that $e$ denotes the multiplicative order of $\eps q$ modulo $\ell$.
Then the lemma follows from \cite{KS16a} and Theorem \ref{uni-maxdef}.
Now we assume that $\ell\mid q-\eps$.
Let $B$ be an $\ell$-block of $G$ and $\tilde B$ an $\ell$-block of $\tG$ covering $B$.
By \cite{FS82}, we may assume that $\tilde B=\mathcal E_\ell(\tG,s)$ for some semisimple $\ell'$-element $s$ of $\tG$.

If $s$ has an elementary divisor of degree $4$, then it can be checked that $\tB$ is a cyclic block, and so is $B$.
If $s$ has an elementary divisor of degree $3$, then there is an irreducible character $\chi\in \mathcal E(\tG,s)$ such that $\Res^{\tG}_G\chi$ is irreducible, and thus the (iBAW) condition holds for $B$ by Proposition~\ref{prop:(2.3)} (ii) and
Theorem~\ref{iBAW-SLSU-some-special-case}~(i).
If $s$ has two (possibly the same) elementary divisors of degree $2$, then using the structure of defect groups  of $\tilde B$ giving in \cite{FS82} and the determinant of radical subgroups in \cite{FLZ20}, we know $B$ is a cyclic block.
Now we assume that $s$ has no elementary divisors of degree $\ge 3$ and has an elementary divisor of degree $1$.
By~\cite[Remark~4.13]{Feng19}, $\tilde B$ covers only one block of $G$.
Blocks of maximal defect are dealt with in Theorem~\ref{uni-maxdef}.
Let $D$ be a defect group of $B$ and $\tilde D$ be a defect group of $\tB$ with $\tilde D\cap G=D$.
If $B$ is not of maximal defect, then direct calculation shows that $\tilde D$ is abelian.
By \cite[Prop.~4.24]{FLZ20}, $C_{\tG}(D)=C_{\tG}(\tilde D)$.
If $G\le J\le\tG$ and $B_1$ is a block of $J$ covered by $\tB$, then $\tilde D\le C_{\tG}(C_{\tG}(\tilde D))= C_{\tG}(C_{\tG}(D))\le C_{\tG}(C_J(\tilde D\cap J)_{\ell'})$.
So $C_{\tG}(C_J(\tilde D\cap J)_{\ell'})G\ge \tilde DG=\tG$.
By \cite[Prop.~5.2]{CS15}, $\tG_{B_1}=\tG$.
Thus the (iBAW) condition holds for $B$ by Proposition~\ref{prop:(2.3)} (i) and
Theorem~\ref{iBAW-SLSU-some-special-case}~(i) and this completes the proof.
\end{proof}

By Corollary \ref{ibaw-n-prime}  and Lemma \ref{ibaw-n-4} we have a consequence for simple groups of type $\mathsf A$ with small rank immediately.

\begin{prop}\label{iBAW-small-rank}
Let $S\in\{\PSL_n(q),\PSU_n(q)\mid  n\le 7 \}$ be a simple group.
Then the (iBAW) condition	holds for $S$.
\end{prop}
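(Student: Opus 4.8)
The plan is to argue by a direct case analysis on $n$, exploiting that the range $2\le n\le 7$ is small enough that every value is covered by one of the two results just established. The key elementary observation is that each integer $n$ with $2\le n\le 7$ is either square-free or equal to $4$: the values $2,3,5,7$ are prime, $6=2\cdot 3$ is square-free, and $4=2^2$ is the only remaining case.

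First I would dispose of the square-free cases. For $n\in\{2,3,5,6,7\}$ the integer $n$ is square-free, so Corollary \ref{ibaw-n-prime} applies verbatim and yields the (iBAW) condition for $S=\PSL_n(\eps q)$ (equivalently, for $S\in\{\PSL_n(q),\PSU_n(q)\}$). Then I would handle the single remaining value $n=4$: here the (iBAW) condition for $S=\PSL_4(\eps q)$ is precisely the statement of Lemma \ref{ibaw-n-4}. Taking the union of these two cases exhausts all $S\in\{\PSL_n(q),\PSU_n(q)\mid n\le 7\}$, which proves the proposition.

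There is no genuine obstacle at this stage, since the substantive work has already been carried out: Corollary \ref{ibaw-n-prime} rests on Theorem \ref{ibaw-nq-1-prime} (which already incorporates the exceptional covering cases $\PSL_3(4)$, $\PSU_4(3)$, $\PSU_6(2)$, together with the defining-characteristic situation handled by \cite{Sp13}), and Lemma \ref{ibaw-n-4} settles $n=4$ directly. The only point to keep in mind while writing the proof is that the simplicity hypothesis on $S$ automatically excludes the degenerate small cases such as $\PSL_2(2)$ and $\PSL_2(3)$, so no extra bookkeeping is required beyond the clean split $\{2,\dots,7\}=\{\text{square-free}\}\cup\{4\}$.
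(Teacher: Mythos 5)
Your proof is correct and coincides with the paper's own argument: the paper likewise observes that every $n\le 7$ is square-free except $n=4$, invokes Corollary \ref{ibaw-n-prime} for the square-free values and Lemma \ref{ibaw-n-4} for $n=4$.
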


\section{The blockwise Alperin weight conjecture for finite groups with abelian Sylow $3$-subgroups}\label{AWC-abel-sylow3}

We consider finite groups with abelian Sylow $3$-subgroups and
prove Theorem~\ref{abel-syl-3}.

\begin{proof}[Proof of Theorem  \ref{abel-syl-3}]
According to the reduction theorem \cite[Thm.~A]{Sp13}, it suffices to prove that any non-abelian simple group $S$ with order dividing by $\ell$ involved in $G$ satisfies the  (iBAW)  condition.
If $S$ is an alternalting group, then this follows by \cite[Thm.~1.1]{Ma14}.
If $S$ is one of the sporadic simple groups, then the  (iBAW)  condition has been checked in \cite{Br} except when $S$ is one of
$\J_4$, $\mathrm{Fi}_{24}'$, $\mathbb B$ and $\mathbb M$.
These  four sporadic simple groups are not involved in $G$ since their Sylow $3$-subgroups are non-abelian~(cf. \cite[\S 5.3]{GLS98}).

Now we assume that $S$ is of Lie type.
Then by \cite[Lemma~2.2]{FLL17b}, $S$ is $\PSL_n(\eps q)$ with  $n\le 5$, a Suzuki group, or $\PSp_4(q)$ ($q>2$, $3\nmid q$).
The case of defining characteristic has been verified in \cite[Thm.~C]{Sp13}. 
If $S$ is a Suzuki group, then $S$ satisfies the  (iBAW)  condition by \cite[Thm.~1.1]{Ma14}. The simple group $\PSp_4(q)$ is verified in~\cite{SF14} for even $q$
 and in \cite{BSF19} or \cite{LL19} for odd $q$, while the simple groups $\PSL_n(\eps q)$ with $n\le 5$ satisfy the  (iBAW)  condition by Proposition \ref{iBAW-small-rank}.
This completes the proof.
\end{proof}

 \section{$2$-blocks of special linear and unitary groups}\label{2-blocks-SLSU}

This section is a continuation of \cite[\S 4]{Feng19}, focusing on the blocks of special linear and unitary groups.
In \cite[\S 4]{Feng19}, the author gives a classification of the blocks of $\SL_n(\eta q)$ for odd prime $\ell \nmid q$, using the labelling set of $d$-Jordan-cuspidal pairs given in \cite{CE99, KM15}.
More precisely, for a given $\ell$-block $\tB$ of $\tG=\GL_n(\eta q)$,  \cite[Remark~4.13]{Feng19} gives the number  of $\ell$-blocks of $G=\SL_n(\eta q)$ covered by $\tB$ when $\ell$ is odd.
If $\ell=2$, \cite[Remark~4.13]{Feng19} also gives an upper bound for this number.
In this section we compute this number.

\subsection{Basic results}

For arbitrary finite groups $K\unlhd H$ and $\chi\in\Irr(H)$, we denote by $\kappa^H_K(\chi)$ the number of irreducible constituents of $\Res^H_K(\chi)$ forgetting multiplicities.
If $\tB$ is a block of $H$, then we denote the number of  blocks of $K$ covered by $\tB$ by $\kappa^{H}_{K}(\tB)$.

Let $G\leqslant\hG\leqslant\tG$.
Since $\tG/G$ is cyclic, by Clifford theory (see for example  \cite[Chap.~6]{Is76} or \cite[Lemma~2.1]{Feng19}) we have  
\begin{equation}\label{eq:branch-irr}
\kappa_{\hG}^{\tG}(\tchi_{s,\lambda}) = |\{ z\in\mathfrak Z\mid  z.(s,\lambda)^{\tG}=(s,\lambda)^{\tG}, o(z)\mid  |\tG/\hG|\}|.
\addtocounter{thm}{1}\tag{\thethm}
\end{equation}
Note that the group $\mathfrak Z$ is defined as on page \pageref{def-frakZ}.

Let $G,\hG,\tG$ be as above.
By \cite[Remark~4.7]{CS17}, for any $\tchi\in\Irr(\tG)$, there is a $\hchi_0\in\Irr(\hG\mid\tchi)$ such that $(\tG\rtimes{D})_{\hchi_0}=\tG_{\hchi_0}\rtimes D_{\hchi_0}$.
Inspired by this, we introduce the following.
\begin{defn}\label{defn:V-split}
	Let $H$, $V$ be finite groups such that $V$ normalises $H$.
	Assume that finite groups $K$ is a normal subgroup of $HV$ satisfy that  $H\cap V$ acts on $K$ via inner automorphisms.
	For any $\tchi\in\Irr(H)$, a character $\chi\in\Irr(K\mid\tchi)$ is called \emph{$V$-split} if $(HV)_\chi=H_\chi V_\chi$.
\end{defn}

An easy and immediate property is as follows.
\begin{lem}\label{lem:V-split}
	Keep the assumptions in Definition~\ref{defn:V-split}.
		\begin{enumerate}[\rm(i)]
		\item If $\chi\in\Irr(K\mid\tchi)$ is $V$-split, then $V_{\tchi}\leqslant V_\chi$.
	\item Assume furthermore that $V$ acts trivially on $H/K$.
	If one character in $\Irr(K\mid\tchi)$ is $V$-split, then so is any one in $\Irr(K\mid\tchi)$.
	\end{enumerate}
\end{lem}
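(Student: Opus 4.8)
The plan is to reduce everything to one elementary observation: since $K\unlhd HV$ and, by hypothesis, both $K$ (trivially) and $H\cap V$ induce only inner automorphisms of $K$, both $K$ and $H\cap V$ fix $\Irr(K)$ pointwise; equivalently $K\le (HV)_\psi$ and $H\cap V\le (HV)_\psi$ for every $\psi\in\Irr(K)$. The only other ingredient is Clifford's theorem: $\Irr(K\mid\tchi)$ (the set of irreducible constituents of $\Res^H_K\tchi$) is a single $H$-orbit, so any two of its members are $H$-conjugate. Throughout I will use that a stabiliser such as $(HV)_\chi$ is a subgroup, so that $(HV)_\chi=H_\chi V_\chi$ forces $(HV)_\chi=V_\chi H_\chi$ as well (take inverses).

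For part (i) I would argue as follows. Let $v\in V_{\tchi}$. Since $v$ normalises $H$ and $K$ and fixes $\tchi$, it permutes $\Irr(K\mid\tchi)$, so $\chi^v$ lies in the $H$-orbit of $\chi$, say $\chi^v=\chi^h$ with $h\in H$; then $vh^{-1}\in (HV)_\chi$. Writing $vh^{-1}=v_1h_1$ with $v_1\in V_\chi$ and $h_1\in H_\chi$ (possible because $\chi$ is $V$-split), we get $v_1^{-1}v=h_1h\in H\cap V$, which therefore fixes $\chi$ by the opening observation; hence $v=v_1(v_1^{-1}v)\in V_\chi$, proving $V_{\tchi}\le V_\chi$.

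For part (ii) the extra hypothesis gives $[V,H]\subseteq K$, so $hvh^{-1}\in vK$ for all $v\in V$ and $h\in H$. Given a $V$-split $\chi$ and any $\chi'\in\Irr(K\mid\tchi)$, Clifford's theorem provides $h\in H$ with $\chi'=\chi^h$. I would first note that $V_\chi=V_{\chi'}$: for $v_0\in V_\chi$ one has $hv_0h^{-1}\in v_0K\subseteq (HV)_\chi$ (using $v_0,K\le (HV)_\chi$), and this says exactly that $v_0$ fixes $\chi^h=\chi'$; the reverse inclusion follows by symmetry via $\chi=(\chi')^{h^{-1}}$. (Note this uses neither $\chi$ nor $\chi'$ being $V$-split.) Now take $g\in (HV)_{\chi'}$; then $hgh^{-1}\in (HV)_\chi=V_\chi H_\chi$, say $hgh^{-1}=v_0h_0$, so that $g=(h^{-1}v_0h)(h^{-1}h_0h)$. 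Here $h^{-1}h_0h\in H$ fixes $\chi^h=\chi'$, hence lies in $H_{\chi'}$, while $h^{-1}v_0h=v_0k$ for some $k\in K$; thus $g=v_0\cdot(k\,h^{-1}h_0h)$ with $v_0\in V_{\chi'}$ and $k\,h^{-1}h_0h\in H$. Since $g$ and $v_0$ both fix $\chi'$, so does $v_0^{-1}g=k\,h^{-1}h_0h$, whence $v_0^{-1}g\in H_{\chi'}$ and $g\in V_{\chi'}H_{\chi'}$. Therefore $(HV)_{\chi'}=V_{\chi'}H_{\chi'}=H_{\chi'}V_{\chi'}$, i.e. $\chi'$ is $V$-split.

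I do not expect a real obstacle here — this is the ``easy and immediate'' lemma the statement promises. The only points demanding care are bookkeeping ones: because $H$ and $V$ need not commute, one cannot freely interchange the factors in $H_\chi V_\chi$ versus $V_\chi H_\chi$, so one must consistently track whether a given element lies in $H$, in $V$, or in $H\cap V$; and one must be careful about the side on which the conjugation action on characters is written. Both are handled automatically by the inner-automorphism hypotheses built into Definition~\ref{defn:V-split} and by Clifford's theorem.
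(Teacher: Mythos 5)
Your proof is correct and follows essentially the same route as the paper's: Clifford's theorem to write any other constituent as $\chi^h$, the hypothesis that $K$ and $H\cap V$ fix every character in $\Irr(K)$, and the triviality of the $V$-action on $H/K$ (i.e.\ $h^{-1}v h\in vK$) to obtain $V_\chi=V_{\chi^h}$. The only cosmetic differences are that you finish (ii) with a direct element-wise factorization of $(HV)_{\chi^h}$ where the paper instead compares the orders of $H_\chi^h V_\chi$ and $(HV)_\chi^h$, and that you supply a full argument for (i), which the paper dismisses as obvious.
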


\begin{proof}
	(i) is obvious. For (ii), we assume $\chi\in\Irr(K\mid\tchi)$ is $V$-split.
	Any character in $\Irr(K\mid\tchi)$ is of the form $\chi^h$ for some $h\in H$.
	Then $(HV)_{\chi^h}=(HV)_\chi^h$ and $H_{\chi^h}=H_\chi^h$.
	Assume $v\in V_\chi$, then $(\chi^h)^v=\chi^{v^{-1}hv}$.
	Since $V$ acts trivially on $H/K$, there is $k\in K$ such that $v^{-1}hv=kh$.
	Thus $(\chi^h)^v=\chi^h$.
	So $V_\chi\leqslant V_{\chi^h}$ and the equality holds by interchanging the role of $\chi$ and $\chi^h$.
	Consequently, $H_\chi^h V_\chi \leqslant (HV)_{\chi^h}$.
	Comparing the order, we have $(HV)_{\chi^h} = H_\chi^h V_\chi = H_{\chi^h} V_{\chi^h}$.
\end{proof}

The following lemma is a generalisation of \cite[Lemma~4.7]{Feng19}.

\begin{lem}\label{lem-split-centra}
	Let 
	$H$, $\tK$, $\tH$ be  finite groups such that $\tH=H\tK$ and $\tK\unlhd \tH$.
Assume that $\tK=\prod_{i=1}^{u} \tK_i^{t_i}$	with $\tK_i=\GL_{m_i}((\eta q)^{d_i})$.
Assume that $\iota_i:\tK_i\to \fZ$ is a group homomorphsm for every $i$
and set $\iota=\prod_{i=1}^u \iota_i^{t_i}:\tK\to \fZ$.
Let $K_i=\ker(\iota_i)$  and  $K=\ker(\iota)$.	
The group $H=\prod_{i=1}^u H_i\wr \fS_{t_i}$ satisfies that $H_i$ stabilizes $\tK_i$ and $H_i= K_i\langle \sigma_i \rangle$, where $\sigma_i$ acts on $\tK_i$ as $F_{\eta q}$.
	
Let $\tchi\in\Irr(\tK)$.
For any $\chi\in\Irr(K\mid \tchi)$, $\tilde l\in\tK$, $h\in H$,  
	one can have $\chi^{\tilde l}=\chi^h$ only if $\chi^{\tilde l}=\chi^h=\chi$.	
\end{lem}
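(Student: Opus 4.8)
The plan is first to recast the statement in a form amenable to attack. The conclusion says exactly that the $\tK$-orbit and the $H$-orbit of $\chi$ in $\Irr(K)$ meet only in $\chi$. Since $\tH=H\tK$ and $H\cap\tK=\prod_i K_i^{t_i}\le\ker\iota=K$ acts trivially on $\Irr(K)$, a short Clifford-theoretic manipulation (writing any element of $\tH$ as $\tilde l h$ with $\tilde l\in\tK$, $h\in H$, and using that $K$ acts trivially on $\Irr(K)$) shows this is equivalent to $\tH_\chi=\tK_\chi H_\chi$, equivalently $\tH_\chi\subseteq\tK_\chi H$. I would also record the structural fact that $H$ acts trivially on $\tK/K$: conjugation by the $K_i$-parts is inner, the permutations in $\fS_{t_i}$ only rearrange identical factors while $\iota=\prod_i\iota_i^{t_i}$ takes values in the abelian group $\fZ$, and $\sigma_i$ acts on $\tK_i$ as $F_{\eta q}$, which is compatible with $\iota_i$ because $F_{\eta q}$ fixes $\fZ=(\barF_q^{\times})^{F_{\eta q}}$ pointwise. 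In particular $\iota$ extends to a homomorphism $\tH\to\fZ$ that is trivial on $H$, and then $\tH_\chi\subseteq\tK_\chi H$ reads simply as $\iota(\tH_\chi)\subseteq\iota(\tK_\chi)$.

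\textbf{The crux.} Everything now comes down to the following: for $h\in H$ such that $\chi^h$ lies in the $\tK$-orbit of $\chi$, one must have $\chi^h=\chi$. Since $\tK/K$ is cyclic (a subgroup of the cyclic group $\fZ$), Clifford theory for $K\unlhd\tK$ gives that $\Res^{\tK}_K\tchi$ is multiplicity free, that its set $\mathcal{C}$ of constituents is a single $\tK$-orbit, and that $\Irr(\tK\mid\xi)=\{\tchi\nu:\nu\in\Lin(\tK/K)\}$ for every $\xi\in\mathcal{C}$. Hence $\chi^h\in\mathcal{C}$ forces $\tchi^h=\tchi\nu$ for some $\nu\in\Lin(\tK/K)$, so $h$ preserves $\mathcal{C}$; moreover $\mathcal{C}$ is $\tK$-equivariantly identified with the cyclic subquotient $\iota(\tK)/\iota(\tK_\chi)$ of $\fZ$, with $\tK$ acting by translation via $\iota$. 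Because $H$ acts trivially on $\tK/K$, hence on $\tK/\tK_\chi$, the element $h$ can only act on $\mathcal{C}$ as the translation by the class $c_h\in\iota(\tK)/\iota(\tK_\chi)$ of $\chi^h$; and $\chi^h=\chi$ is equivalent to $c_h$ being trivial. So it suffices to show $c_h=1$.

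\textbf{Triviality of $c_h$.} I would prove this by checking that the parametrization $\mathcal{C}\xrightarrow{\sim}\iota(\tK)/\iota(\tK_\chi)$ is equivariant for generators of $H$: the $K_i$-parts act trivially because they lie in $K$; the field automorphisms $\sigma_i$ act on the $\iota_i$-values, hence on the labels, through $F_{\eta q}$, which is the identity on $\fZ$; and a permutation in $\fS_{t_i}$ merely permutes identical tensor factors and leaves the product $\iota=\prod_i\iota_i^{t_i}$ fixed. Thus each of them fixes every label, so $c_h=1$, and therefore $\chi^h=\chi$. Combined with the reformulation this yields $\chi^{\tilde l}=\chi^h=\chi$, as required.

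\textbf{Main obstacle.} The delicate point is the last step: one must justify rigorously that the abstract action of $h\in H$ on the \emph{set} $\mathcal{C}$ of constituents really does agree with its action on the $\fZ$-valued labels. This rests on the explicit description of $\Res^{\tK}_K\tchi$ (equivalently, Jordan decomposition for groups between $\SL$ and $\GL$) together with its compatibility with field automorphisms and with the factor-permuting symmetric groups; for a single $\GL_{m_i}$-factor this is the equivariance of the Clifford theory of $\SL_{m_i}\le\GL_{m_i}$ under $\sigma_i$ in the spirit of the character-triple machinery of \cite{FLZ20, BS19}, and the general case is assembled from it by ordinary Clifford theory for the direct product and wreath structure. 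A secondary technical point is the case where a permutation genuinely moves $\tchi$: membership of $h$ in the relevant subgroup forces the permuted factors to differ only by a $\Lin(\tK/K)$-twist, i.e.\ $\tchi$ is ``balanced'' across them, after which the permutation acts on the labels as a permutation of equal summands and is again trivial.
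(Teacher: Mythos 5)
There is a genuine gap at the decisive step. Your reduction is fine: the claim is equivalent to $\tH_\chi=\tK_\chi H_\chi$ (the paper phrases this as $\chi$ being ``$H$-split'' in the sense of Definition~\ref{defn:V-split} and Lemma~\ref{lem:V-split}), $\tK/K$ is cyclic, the constituents $\mathcal C$ of $\Res^{\tK}_K\tchi$ form a single multiplicity-free $\tK$-orbit, and an $h\in H$ with $\chi^h\in\mathcal C$ acts on $\mathcal C$ as translation by some $c_h\in\iota(\tK)/\iota(\tK_\chi)$. But your argument that $c_h$ is trivial does not work as stated: you deduce it from the facts that $\iota$ is conjugation-invariant and that $F_{\eta q}$ acts trivially on $\fZ$. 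Those facts only show that the $h$-action commutes with the translation action of $\tK$ on the torsor $\mathcal C$ --- which is automatic for any translation, trivial or not. Indeed, the identical reasoning applies to conjugation by an element $\tilde l_0\in\tK$ (it also preserves $\iota$-values and acts trivially on $\fZ$ and on $\tK/K$), yet such an element translates $\mathcal C$ by $\iota(\tilde l_0)$, which is nontrivial in general. The labelling $\chi^{\tilde l}\mapsto\iota(\tilde l)$ is base-point dependent, so ``equivariance of the parametrization'' is exactly the statement $\chi^h=\chi$ you are trying to prove, and checking it ``on generators'' is circular. You flag this yourself as the main obstacle, but the resolution you sketch (compatibility of Jordan decomposition for $\SL\le\GL$ with field automorphisms) is precisely the nontrivial input, and it is not supplied.

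The paper closes this gap with a concrete external input and an explicit wreath-product computation. For a single factor $\tK_1=\GL_{m_1}((\eta q)^{d_1})$ it invokes \cite[Remark~4.7]{CS17} --- an equivariant character correspondence result from the inductive McKay verification for type $\mathsf A$ --- to produce one $H$-split constituent $\chi_0\in\Irr(K\mid\tchi)$, and then Lemma~\ref{lem:V-split}(ii) (using that $\sigma_1$ acts trivially on $\tK/K$) to propagate splitness to every constituent. The case $\tK=\tK_1^{t_1}$ is then handled by writing $h=(h_1,\ldots,h_{t_1};\tau)$ with $\tau$ a cycle, producing elements $\tilde l_i\in\tK_1$ whose product is shown to stabilize $\chi_{0,1}$ via the single-factor case, normalizing so that $\tilde l_1\cdots\tilde l_{t_1}=1$, and concluding with \cite[Lemma~7.2]{FLZ20}; your treatment of the permutation part is likewise only a gesture. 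To repair your proof you would need to import an actual equivariance statement of the \cite{CS17} type at the single-factor stage; the torsor/label formalism alone cannot produce it.
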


\begin{proof}
It is equivalent to proving that every character	$\chi\in\Irr(K\mid \tchi)$ is $H$-split.
First we consider the case that $u=1$ and $t_1=1$, namely, $\tK=\GL_{m_1}((\eta q)^{d_1})$, $[\tK,\tK]\le K_1= K\le \tK$, $|\tK/K|\mid (q-\eta)$ and $H=K\langle\sigma_1 \rangle$.
Since $\sigma_i$ acts on $\tK_i$ as $F_{\eta q}$ a field automorphisms of order $d_i$, by \cite[Remark 4.7]{CS17}, there is a $\chi_0\in\Irr(K\mid \tchi)$ which is $H$-split.
Also it can be checked directly that $\sigma_1$ acts trivially on $\tK/K$.  
Thus by Lemma \ref{lem:V-split}, every $\chi\in\Irr(K\mid \tchi)$ is $H$-split.

Now let $u=1$ and $t_i>1$, namely, $\tK=\tK_1^{t_1}$ with $\tK_1=\GL_{m_1}((\eta q)^{d_1})$.
Let $\tchi=\tchi_1\times \cdots\times\tchi_{t_1}$, where $\tchi_i\in\Irr(\tK_1)$ for $1\le i\le t_1$.
For $\chi\in\Irr(K\mid \tK)$, we let
$\chi_0\in\Irr(K_1^{t_1}\mid \chi)$ with $\chi_0=\chi_{0,1}\times \cdots\times\chi_{0,t_1}$, where $\chi_{0,i}\in\Irr(K_1)$ for $1\le i\le t_1$.
Let $h\in H$ with $\chi^h\in\Irr(K\mid \tchi).$
Then
$\chi_0^h\in\Irr(K_1^{t_1}\mid\tchi)$,
and without loss of generality, we may assume that $h=(h_1,\ldots,h_{t_1};\tau)$ with $h_i\in H_1$ and $\tau=(1,\ldots,t_1)$ is a $t_1$-cycle.
So $\chi_0^h=\chi_{0,t_1}^{h_{t_1}}\times \chi_{0,1}^{h_1}\times\cdots \chi_{0,t_1-1}^{h_{t_1-1}}$.
Hence
there exist $\tilde l_1,\ldots, \tilde l_{t_1}\in \tK_1$ such that
$\chi_{0,1}^{\tilde l_1}=\chi_{0,t_1}^{h_{t_1}}$ and $\chi_{0,i}^{\tilde l_i}=\zeta_{0,i-1}^{h_{i-1}}$ for $2\le i\le t_1$.
Since the outer automorphisms induced by $H_1\tK_1$ on $K_1$ form an abelian group, one has that $\chi_{0,1}^{\tilde l_1\cdots \tilde l_{t_1}}=\chi_{0,1}^{h_1\cdots h_{t_1}}$, and then by the above  paragraph
$\tilde l_1\cdots \tilde l_{t_1}\in (\tK_1)_{\chi_{0,1}}$.
By replacing $\tilde l_1$ with $(\tilde l_1\cdots \tilde l_{t_1})^{-1}\tilde l_1$, we can assume that $\tilde l_1\cdots \tilde l_{t_1}=1$.
Let $\tilde l=(\tilde l_1,\ldots, \tilde l_{t_1})$.
Then $\tilde l\in K$ and $\chi_0^h=\chi_0^{\tilde l}$.
So 
$\chi_0^h\in\Irr(K_1^{t_1}\mid \chi)$.
Hence $\chi,{^h\chi}\in\Irr(K\mid \tchi)\cap\Irr(K\mid\chi_0)$. By \cite[Lemma 7.2]{FLZ20},
$\chi^h=\chi$.
This proves that $\chi$ is $H$-split.

The assertion in the general case now follows by reduction to the preceding cases.
\end{proof}

Assume that $p$ is odd and $\ell=2$ and let $\alpha$ be a non-negative integer, then the sets $\cF_{\alpha,0},\cF_{\alpha,1},\cF_{\alpha,2},\cF_\alpha,\cF'_\alpha$ are defined similarly as $\cF_{0},\cF_{1},\cF_{2},\cF,\cF'$
with $q$ replaced by $(\eta q)^{2^\alpha}$ respectively.
We have a surjective map
\begin{equation}\label{eq:Phi_alpha}
\Phi_\alpha:\ \cF_\alpha \to \cF,\ \group{F_{\eta{q}}^{2^\alpha}}\cdot\xi \mapsto \group{F_{\eta{q}}}\cdot\xi.
\addtocounter{thm}{1}\tag{\thethm}
\end{equation}
The inverse images of $\group{F_{\eta{q}}}\cdot\xi$ under $\Phi_\alpha$ are the $\group{F_{\eta{q}}}$-orbits of the set $\group{F_{\eta{q}}^{2^\alpha}}\cdot\xi$.

Let $\Ga\in\cF$ and $\xi\in\barF_{q}^\times$ be a root of $\Ga$.
We define $\Ga_{(\al)}$ to be a polynomial in $\cF_\al$ which has a root $\xi$.
Then  the set of roots of $\Ga_{(\al)}$ is an element in the  inverse images of $\group{F_{\eta{q}}}\cdot\xi$ under $\Phi_\alpha$. Conversely, every element in the  inverse images of $\group{F_{\eta{q}}}\cdot\xi$ under $\Phi_\alpha$ provides a choice of $\Ga_{(\al)}$.

\begin{lem}\label{number-2-pow}
Let $z\in\mathfrak Z$ and $\Ga\in\cF'$ (i.e., the roots of $\Ga$ have $2'$-order).
Then $z.\Ga=\Ga$ if and only if $z.\Ga_{(\al)}=\Ga_{(\al)}$ for any non-negative integer $\alpha$.
\end{lem}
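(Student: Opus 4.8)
The plan is to pass to the description of $\cF$ (and of the $\cF_\alpha$) in terms of Frobenius orbits of roots. Fix a root $\xi\in\barF_q^\times$ of $\Gamma$; then $\Gamma$ is identified with the orbit $\group{F_{\eta q}}\cdot\xi=\{\xi^{(\eta q)^i}\mid i\ge 0\}$, and for the choice determined by $\xi$ the polynomial $\Gamma_{(\alpha)}$ is identified with the sub-orbit $\group{F_{\eta q}^{2^\alpha}}\cdot\xi$. Writing $m=o(\xi)$, the size of the first orbit is $d_\Gamma=\operatorname{ord}_m(\eta q)$. The preliminary remark I would record is that $F_{\eta q}$ fixes $\mathfrak Z$ pointwise: since $\eta q-1=\eta(q-\eta)$, we have $z^{\eta q}=z$ for $z\in\mathfrak Z$, and hence $z^{(\eta q)^i}=z$ for all $i\ge 0$. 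Consequently, for $z\in\mathfrak Z$, the operation ``multiply every root by $z$'' on $\cF$ coincides with ``multiply the distinguished root by $z$ and take its $F_{\eta q}$-orbit'', and likewise on each $\cF_\alpha$; in particular every $\Phi_\alpha$ is $\mathfrak Z$-equivariant. This gives the ``if'' implication immediately: applying $\Phi_\alpha$ to $z.\Gamma_{(\alpha)}=\Gamma_{(\alpha)}$ and using $\Phi_\alpha(\Gamma_{(\alpha)})=\Gamma$ yields $z.\Gamma=\Gamma$.

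For the ``only if'' implication, suppose $z.\Gamma=\Gamma$. Then $z\xi$ lies in the $F_{\eta q}$-orbit of $\xi$, so $z\xi=\xi^{(\eta q)^j}$ for a unique $j$ with $0\le j<d_\Gamma$; using $z^{(\eta q)^i}=z$ one checks that this $j$ is the same for every root of $\Gamma$, and that $z=\xi^{(\eta q)^j-1}$. Now $z.\Gamma_{(\alpha)}$ is the $\group{F_{\eta q}^{2^\alpha}}$-orbit of $z\xi=\xi^{(\eta q)^j}$. Identifying $\group{F_{\eta q}}\cdot\xi$ with $\Z/d_\Gamma\Z$ (on which $F_{\eta q}$ acts by $+1$), the $\group{F_{\eta q}^{2^\alpha}}$-orbits inside it are exactly the cosets of $\gcd(2^\alpha,d_\Gamma)\Z/d_\Gamma\Z$, so $z.\Gamma_{(\alpha)}=\Gamma_{(\alpha)}$ holds if and only if $\gcd(2^\alpha,d_\Gamma)$ divides $j$. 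It therefore suffices to prove that $2^{v_2(d_\Gamma)}$ divides $j$.

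For this I would argue prime by prime. Since $\Gamma\in\cF'$, the integer $m$ is odd. From $z=\xi^{(\eta q)^j-1}\in\mathfrak Z$ we get $m\mid((\eta q)^j-1)(q-\eta)$. Fix a prime $r\mid m$, necessarily odd, and set $b=v_r(m)$. If $r\nmid q-\eta$, then $r^b\mid(\eta q)^j-1$, so $\operatorname{ord}_{r^b}(\eta q)$ divides $j$; if $r\mid q-\eta$, then $q\equiv\eta$, hence $\eta q\equiv\eta^2=1\pmod r$, which places $\eta q$ in the subgroup of $(\Z/r^b\Z)^\times$ of order $r^{b-1}$, so (this group being cyclic for odd $r$) $\operatorname{ord}_{r^b}(\eta q)$ is a power of $r$. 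In both cases $v_2(\operatorname{ord}_{r^b}(\eta q))\le v_2(j)$. Since $d_\Gamma=\operatorname{ord}_m(\eta q)=\operatorname{lcm}_{r\mid m}\operatorname{ord}_{r^{v_r(m)}}(\eta q)$, passing to $v_2$ gives $v_2(d_\Gamma)\le v_2(j)$, which is exactly what was needed; hence $z.\Gamma_{(\alpha)}=\Gamma_{(\alpha)}$ for every $\alpha\ge 0$.

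The only genuinely delicate step is the last paragraph, where one must separate the cases $r\mid q-\eta$ and $r\nmid q-\eta$, and in the former case use the cyclic $r$-group structure of the principal units of $\Z/r^b\Z$ to see that such primes do not contribute to the $2$-part of $d_\Gamma$. Everything else is bookkeeping with Frobenius orbits, the key enabling fact being that $F_{\eta q}$ (and hence all its powers) act trivially on $\mathfrak Z$, which is precisely what makes the $\mathfrak Z$-action compatible with the maps $\Phi_\alpha$.
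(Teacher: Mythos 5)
Your proof is correct and follows essentially the same route as the paper's: both reduce to showing that the exponent $j$ with $z\xi=\xi^{(\eta q)^{j}}$ is divisible by the $2$-part of $d_\Gamma$, using $o(\xi)\mid((\eta q)^{j}-1)(q-\eta)$ together with the observation that, $o(\xi)$ being odd, the $2$-part of $d_\Gamma=\operatorname{ord}_{o(\xi)}(\eta q)$ is carried by primes not dividing $q-\eta$. The only difference is organizational: the paper invokes the formula $d_\Gamma=\operatorname{lcm}(e_ip_i^{\alpha_i})$ from \cite{FS82} and picks a single witnessing prime $p_{i_0}$ with $2^{\alpha_\Gamma}\mid e_{i_0}$, whereas you run the same valuation argument over all primes dividing $o(\xi)$ and take the maximum of the $2$-adic valuations.
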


\begin{proof}
Let $\xi$ be a root of both $\Ga$ and $\Ga_{(\al)}$, and
 $o(\xi)=p_1^{t_1}\cdots p_u^{t_u}$, where $p_1,\ldots,p_u$ are distinct odd primes. Also we write $d=d_\Ga=m_\Ga 2^{\al_\Ga}$ with odd $m_\Ga$.
We denote by $\nu_{i}$ the discrete valuation such that $\nu_{i}(p_i)=1$,
by $e_i$ the multiplicative order of $\eps q$ modulo $p_i$,
 and by $a_{i}=\nu_{i}((\eta q)^{e_i}-1)$ for $1\le i\le u$.
 Let $\alpha_i=\max \{ t_{i}-a_{i}, 0 \}$.
Then by \cite[(3A)]{FS82} or \cite[Lemma 5.3]{FLZ20}, $d$ is the least common multiple of the
$e_i p_i^{\al_i}$, where $i$ runs through the integers between $1$ and $u$.
So there exists $1\le i_0\le u$ such that $2^{\alpha_\Ga}\mid e_{i_0}$.
Also, the polynomial $\Ga_{(\al)}$ has degree $d/\gcd(d,2^\alpha)$.
Thus $\Ga_{(\al)}=\Ga$ if $\al_\Ga=0$ and $\Ga_{(\al)}=\Ga_{(\al_\Ga)}$ if $\al\ge \al_\Ga$.
So it suffices to let $0<\al\le\al_\Ga$.
In particular $e_{i_0}>1$.

We may assume that $z$ has $2'$-order. 
It is obvious that $z.\Ga_{(\al)}=\Ga_{(\al)}$ implies $z.\Ga=\Ga$.
Now we assume that $z.\Ga=\Ga$,
then  $z\xi=\xi^{(\eta q)^k}$ and then
$z=\xi^{(\eta q)^k-1}$ for some $k\ge 1$.
Thus $o(\xi)\mid ((\eta q)^k-1)(\eta q-1)$ and in particular $p_{i_0}^{t_{i_0}}\mid ((\eta q)^k-1)(\eta q-1)$.
Note that $p_{i_0}\nmid (\eta q-1)$ since
$e_{i_0}>1$.
So $p_{i_0}^{t_{i_0}}\mid (\eta q)^k-1$ and thus $e_{i_0}\mid k$.
In particular, $2^{\al_\Ga}\mid k$.
By the definition of $\Ga_{(\al)}$, $\xi^{(\eta q)^k}$ is a root of $\Ga_{(\al)}$, and so $z.\Ga_{(\al)}=\Ga_{(\al)}$. This completes the proof.
\end{proof}

\subsection{$2$-blocks of special linear and unitary groups}

We will follow the notation for basic 2-subgroups and radical $2$-subgroups of linear and unitary groups in \cite{FLZ20} and always consider the $2$-modular representations from now on.

Let $\tB$ be a $2$-block of $\tG$ with a maximal Brauer pair
$(\tR,\tilde b)$. Let $\ttheta$ be the canonical character of $\tilde b$
and $R=\tR\cap G$.
Note that the defect groups are  radical subgroups.

First we recall the structure of $\tR$.
Set $2^{a+1}=(q^2-1)_2$, thus $a\geqslant2$.
Recall that for  $\tG=\GL_n(\eta q)$ with odd $q$, by \cite{Brou86}, there is a semisimple $2'$-element $s$ of $\tG$ associated with $\tB$, namely, $\tB=\mathcal{E}_2(\tG,s)=\bigcup\limits_{t}\mathcal{E}(\tG,st)$,
where $t$ runs through the semisimple $2$-elements of $\tG$ commuting with $s$. 
Let $s=\prod\limits_{\Gamma\in\cF} m_\Gamma(s)(\Gamma)$ be the primary decomposition.
Here $m_\Gamma(s)$ is the mutiplicity of $\Gamma$  in $s$ as an elementary divisor and $(\Gamma)$ is the companion matrix of $\Gamma$.
By \cite{Brou86} again, a Sylow $2$-subgroup $\tR$ of $C_{\tG}(s)$ is a defect group of $\tilde B_s$.
Now $C_{\tG}(s)=\prod\limits_\Gamma C_\Gamma(s)$ with $C_\Gamma(s)\cong\GL_{m_\Gamma(s)}((\eta q)^{d_\Gamma})$.
Then $\tR=\prod\limits_\Gamma \tR_\Gamma$, where $\tR_\Gamma$ is a Sylow $2$-subgroup of $C_\Gamma(s)$.
For each $\Gamma$, let $m_\Gamma$ and $\alpha_\Gamma$ be such that $m_\Gamma2^{\alpha_\Gamma}=d_\Gamma$ with odd $m_\Gamma$.
Then by \cite{CF64}, we have the following.

\begin{enumerate}[(1)]
	\item If $4\mid q-\eta$ or $\alpha_\Gamma\geqslant1$, set $m_\Gamma(s)=2^{\beta_{\Gamma,1}}+\cdots+2^{\beta_{\Gamma,t_\Gamma}}$ with $0\leqslant\beta_{\Gamma,1}<\cdots<\beta_{\Gamma,t_\Gamma}$.
	Then $\tR_\Gamma=\prod\limits_{i=1}^{t_\Gamma} \tR_{m_\Gamma,\alpha_\Gamma,0,\bar\beta_{\Gamma,i}}$, where $\tR_{m_\Gamma,\alpha_\Gamma,0,\bar\beta_{\Gamma,i}}$ is defined as in \cite[\S 4]{FLZ20} and $\bar\beta_{\Gamma,i}=(1,1,\ldots,1)$ with $\beta_{\Gamma,i}$ one's.
	
	\item If $4\mid q+\eta$ and $\alpha_\Gamma=0$, then $d_\Gamma=m_\Gamma$ is odd.
	\begin{enumerate}
		\item[(2i)] If furthermore $m_\Gamma(s)$ is even, set $m_\Gamma(s)=2^{\beta_{\Gamma,1}}+\cdots+2^{\beta_{\Gamma,t_\Gamma}}$ with $0<\beta_{\Gamma,1}<\cdots<\beta_{\Gamma,t_\Gamma}$.
		Then $\tR_\Gamma=\prod\limits_{i=1}^{t_\Gamma}\tilde S_{m_\Gamma,1,0,\overline{\beta_{\Gamma,i}-1}}$, where $\tilde S_{m_\Gamma,1,0,\overline{\beta_{\Gamma,i}-1}}$ is defined as in~\cite[\S 4]{FLZ20}.
		
		\item[(2ii)] If furthermore $m_\Gamma(s)$ is odd, set $m_\Gamma(s)=1+2^{\beta_{\Gamma,1}}+\cdots+2^{\beta_{\Gamma,t_\Gamma}}$ with $0<\beta_{\Gamma,1}<\cdots<\beta_{\Gamma,t_\Gamma}$.
		Then $\tilde R_\Gamma=\tR_{m_\Gamma,0}\times\prod\limits_{i=1}^{t_\Gamma} \tilde S_{m_\Gamma,1,0,\overline{\beta_{\Gamma,i}-1}}$. Note that $\tR_{m_\Gamma,0}=\{\pm I_{m_\Gamma}\}$. 
	\end{enumerate}
\end{enumerate}

Let $\tC=C_{\tG}(\tR)$. Then $\tC\cong\prod_\Gamma \prod_{i=1}^{t_\Ga} \GL_{m_\Ga}((\eta q)^{2^{\al_\Ga}})\otimes I_{2^{\beta_{\Ga,i}}}$ and $\ttheta=\prod_\Gamma \prod_{i=1}^{t_\Ga} \ttheta_\Ga\otimes I_{2^{\beta_{\Ga,i}}}$, where $\ttheta_\Ga\otimes I_{2^{\beta_{\Ga,i}}}$ is defined as in \cite[\S 3E]{FLZ20}.
Note that $\ttheta_\Ga$ is the character of $\GL_{m_\Ga}((\eta q)^{2^{\al_\Ga}})$ corresponding to the semisimple $2'$-element with the only elementary divisor $\Ga_{(\alpha_\Ga)}$ of multiplicity 1.

Now we recall the normaliser $\tN=N_{\tG}(\tR)$.
First note that it is possible that some component of $\tR_\Ga$ is isomorphic to a component of $\tR_{\Ga'}$ for different $\Ga$, $\Ga'$ with $d_\Ga=d_{\Ga'}$.
From this we rewrite $\tR=\prod_{i=1}^{t}\tR_i^{u_i}$, where $\tR_i$'s are the components of $\tR$ such that $\tR_i\ne \tR_j$ if $i\ne j$.
Then $\tC=\prod_{i=1}^{t}\tC_i^{u_i}$  and
$\tN=\prod_{i=1}^{t} N_i\wr \fS_{u_i}$, where $\fS_{u_i}$ is the symmetric group on $u_i$ symbols.
Now we assume that $\tR_i$ is a component of $\tR_\Ga$ for some $\Ga$. 
By \cite{An92,An93}, then $\tN_i/\tR_i$ is a direct product of $\tC_i\tR_i/\tR_i$ and a subgroup of $\tN_i/\tR_i$ if $\alpha_\Ga=0$.
If $\alpha_\Ga>0$, then $\tC_i\cong\GL_{m_\Ga}((\eta q)^{2^{\al_\Ga}})$.
Then $\tN_i/\tR_i$ is a direct product of $\tN_{i,0}/\tR_i$ and a subgroup of $\tN_i/\tR_i$, where $\tN_{i,0}\unlhd \tN_i$ such that $\tN_i=\tN_{i,0}\langle \tau \rangle$.
Here $\tau$ acts on $\tC_i\cong\GL_{m_\Ga}((\eta q)^{2^{\al_\Ga}})$ as $F_{\eta q}$.

Let  $R=\tR\cap G$ and
$\tR'=\mathrm{O}_2(N_{\tG}(R))$. Then $R\le\tR'\le \tR$ and $N_{\tG}(\tR')=N_{\tG}(R)$ by~\cite[Lemma~2.2]{Feng19}.
By \cite[Prop. 4.37 and 4.47]{FLZ20},
$C_{\tG}(\tR')=C_{\tG}(R)$.

First we consider the case that $\tR'=\tR$.
We view $\ttheta$ as a character of $C_{\tG}(\tR)/Z(\tR)$
and let $\theta\in\Irr( C_{G}(R)/Z(R) \mid \ttheta)$.
Then $\theta$ is also of $2$-defect zero.
We also view $\theta$ as a character of $RC_G(R)$ whose kernel contains $R$.
Let $b$ be the $2$-block of $RC_{G}(R)$ containing $\theta$.
Then
$\theta$ is the canonical character of $b$.
Let $B=b^G$.
By~\cite[Lemma~2.3]{KS15}, $\tB$ covers $B$.
Conversely, every $2$-block of $G$ covered by $\tB$ can be obtained by the above process from a suitable $\theta\in \Irr( C_{G}(R)/Z(R) \mid \ttheta^{\tilde n})$ with $\tilde n\in N_{\tG}(\tR)$.

If $\tR'\ne\tR$, then according to \cite[\S 5D]{FLZ20}, exactly one of the following cases occurs.
We set $\tC'=C_{\tG}(\tR')$ and $\tN'=N_{\tG}(\tR')$.

\emph{Case (i)}.  $4\mid q-\eta$, $\tR=\tR_\Gamma=\tR_{m_\Gamma,\alpha_\Gamma}$ and $\ttheta=\ttheta_\Ga$
for some $\Gamma\in\cF'$ with $\alpha_\Gamma>0$.
This means that $m_\Ga(s)=1$ and $n=d_\Ga$. 
Up to conjugacy,  we may take $\tR'=\tR_{m_\Gamma\ell^{\alpha'},\alpha_\Gamma-\alpha'}$, where $\al'=\min\{a,\al_\Ga\}$.
Let $\ttheta'=\ttheta'_\Ga=\ttheta_\Ga^{(\alpha')}$, where $\ttheta_\Ga^{(\alpha')}$ is defined as in \cite[\S 5C]{FLZ20}.
Also, $\tC'\cong\GL_{m_\Gamma\ell^{\alpha'}}((\eta q)^{2^{\alpha_\Gamma-\alpha'}}) $ and $\tN'=\tC'\langle \tau \rangle$, where $\tau$ acts on $\tC'$ as $F_{\eta q}$.

\emph{Case (ii)}. $4\mid q-\eta$, $a=2$, $\tR=\tR_\Gamma=\tR_{m_\Gamma,0,0,\bar{1}}$ and $\ttheta=\ttheta_{\Ga,1}$ 
for some $\Gamma\in\cF'$ with $\alpha_\Gamma=0$, where $\ttheta_{\Ga,1}$ is defined as in \cite[\S 3E]{FLZ20}.
This means that $m_\Ga(s)=2$ and $n=2d_\Ga$. 
Up to conjugacy,  we may take $\tR'=\tR_{m_\Gamma,0,1}$. By \cite[\S 3B]{FLZ20}, $\tN'/\tR'=\tC'\tR'/\tR'\times \Sp_2(2)$ and $\tC'\cong \GL_{m_\Ga}(\eta q)\otimes I_2$.
Let $\ttheta'=\ttheta_\Ga\otimes I_2$.

\emph{Case (iii)}.  $4\mid q+\eta$, $\tR=\tR_\Gamma=\tR_{m_\Gamma,\alpha_\Gamma}$ and $\ttheta=\ttheta_\Ga$
 for some $\Gamma\in\cF'$ with $\alpha_\Gamma>1$.
This means that $m_\Ga(s)=1$ and $n=d_\Ga$.
Up to conjugacy,  we take $\tR'=\tR_{2m_\Gamma,\alpha_\Gamma-1}$ and $\ttheta'=\ttheta'_\Ga=\ttheta_\Ga^{(1)}$.
Also, $\tC'\cong\GL_{2m_\Gamma}((\eta q)^{2^{\alpha_\Gamma-1}}) $ and $\tN'=\tC\langle \tau \rangle$, where $\tau$ acts on $\tC'$ as $F_{\eta q}$.

\emph{Case (iv)}. $4\mid q+\eta$, $a=2$ and $\tR=\tR_\Gamma=\tilde S_{m_\Gamma,1,0}$ for some $\Gamma\in\cF'$ with $\alpha_\Gamma=0$.
This means that $m_\Ga(s)=2$ and $n=2d_\Ga$.
Up to conjugacy,  we may take $\tR'=\tR^-_{m_\Gamma,0,1}$, where $\tR^-_{m_\Gamma,0,1}$ is defined as in \cite[\S 3C]{FLZ20}.
Also $\tN'/\tR'=\tC'\tR'/\tR'\times \mathrm{GO}^-_2(2)$  and $\tC'\cong \GL_{m_\Ga}(\eta q)\otimes I_2$.
Let $\ttheta'=\ttheta_\Ga\otimes I_2$.

For the case (i)--(iv) above, the Brauer pair $(\tR',\ttheta')$ is a $\tB$-pair of $\tG$ by \cite{Brou86}; see also \cite[\S 5D]{FLZ20}.
We view $\ttheta'$ as a character of $C_{\tG}(\tR')/Z(\tR')$
and let $\theta\in\Irr( C_{G}(R)/Z(R) \mid \ttheta')$.
Then $\theta$ is also of $2$-defect zero by \cite[\S 5.C]{FLZ20}.
We also view $\theta$ as a character of $RC_G(R)$ whose kernel contains $R$.
Let $b$ be the $2$-block of $RC_{G}(R)$ containing $\theta$.
Then
$\theta$ is the canonical character of $b$.
Let $B=b^G$.
By \cite[Lemma~2.3]{KS15}, $\tB$ covers $B$.
Conversely, every $2$-blocks of $G$ covered by $\tB$ can be obtained through the above 	process from a suitable $\theta\in \Irr( C_{G}(R)/Z(R) \mid \ttheta'^{\tilde n})$ with $\tilde n\in N_{\tG}(\tR')$.

In order to deal with the two cases that $\tR'=\tR$ and $\tR'\ne \tR$ simultaneously,
we also use the notation $\tC'$, $\tN'$, $\ttheta'$ for $\tC$, $\tN$, $\ttheta$ if $\tR'=\tR$.
Also, we let $C=C_G(R)$, $N=N_G(R)$.

\begin{lem}\label{lem-block}
	Keep the hypotheses and setup above.
Then $\kappa^{\tG}_{G}(\tB_s)=\kappa^{\tC'}_{C}(\ttheta')$.
\end{lem}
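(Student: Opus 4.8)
The plan is to reduce the problem of counting $2$-blocks of $G$ covered by $\tB_s$ to a problem purely about $N_{\tG}(\tR')$-orbits of canonical characters of the centraliser $\tC'$. The key point is that, by the paragraphs preceding the statement, every $2$-block $B$ of $G$ covered by $\tB_s$ arises (uniquely up to the indicated conjugacy) from some $\theta\in\Irr(C/Z(R)\mid\ttheta'^{\tilde n})$ with $\tilde n\in\tN'=N_{\tG}(\tR')$, via $B=\bl_2(\theta)^G$, where $\theta$ is the canonical character of its block $b$ of $RC$. So I first want to set up a surjection from the relevant set of characters $\theta$ onto the set of such blocks $B$, and then compute the fibres.

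**First I would** make precise which $\tG$-conjugacy (equivalently $\tN'$-conjugacy) identifications are in play. Two such blocks $B=\bl_2(\theta)^G$ and $B'=\bl_2(\theta')^G$ coincide if and only if the pairs $(R,\theta)$ and $(R,\theta')$ are $N_G(R)$-conjugate, by the standard theory of Brauer pairs (here $(R,\theta)$ plays the role of a $B$-subpair of $G$ and $N$ controls fusion of these subpairs inside the fixed defect-type data — this uses that $\theta$ is the canonical character of $b$ and $b^G=B$). Thus $\kappa^{\tG}_G(\tB_s)$ equals the number of $N=N_G(R)$-orbits on $\{\,\theta\in\Irr(C/Z(R))\mid \bl_2(\theta)^G=B\text{ for some block }B\text{ covered by }\tB_s\,\}$, which by the description above is exactly the set $\bigcup_{\tilde n\in\tN'}\Irr(C/Z(R)\mid\ttheta'^{\tilde n})$. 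On the other side, by definition $\kappa^{\tC'}_C(\ttheta')$ is the number of $\tC'$-orbits — equivalently, since $C\unlhd\tC'$, the number of $\tN'$-orbits is what we need, so I should check $\tN'$ and $\tC'$ induce the same orbits on $\Irr(C/Z(R)\mid\ttheta')$, which follows because $\tN'=\tC'\langle\tau\rangle$ (or $\tN'/\tR'=\tC'\tR'/\tR'\times(\text{small group})$, which acts trivially on $\tC'$ modulo inner) and the stabiliser computations will absorb the extra generators.

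**The heart of the argument** will be to identify the two counts. Running over $\tilde n\in\tN'$ and $\theta\in\Irr(C/Z(R)\mid\ttheta'^{\tilde n})$, the set of characters $\theta$ obtained is $\bigsqcup_{\ttheta''}\Irr(C/Z(R)\mid\ttheta'')$ where $\ttheta''$ runs over the $\tN'$-orbit of $\ttheta'$ on $\Irr(\tC'/Z(\tR'))$; and two of these $\theta$'s give the same $N$-orbit (hence the same block $B$) precisely when they lie in one $\tN'$-orbit. So I need: the map sending an $\tN'$-orbit on $\Irr(C/Z(R))$ lying over the $\tN'$-orbit of $\ttheta'$, to the corresponding block $B$, is a bijection; and the number of such $\tN'$-orbits equals the number of $N$-orbits on $\Irr(C/Z(R)\mid\ttheta')$, which is $\kappa^{\tC'}_C(\ttheta')$ once one checks $\tN'$-orbits and $\tC'$-orbits coincide here. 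This last coincidence is exactly where Lemma~\ref{lem-split-centra} (equivalently Lemma~\ref{lem:V-split}) enters: the structure $\tN'=\tC'\langle\tau\rangle$ with $\tau$ acting as a field automorphism $F_{\eta q}$ on each component $\GL_{m_\Ga}((\eta q)^{2^{\al_\Ga}})$ is precisely the setup of Lemma~\ref{lem-split-centra} with $H=N$, $\tK=\tC'$, $K=C$, so every character of $C$ over a given character of $\tC'$ is $N$-split, forcing $\tN'$-orbits and $\tC'$-orbits on $\Irr(C\mid\ttheta')$ to agree, and similarly matching $N$-stabilisers with $\tC'$-stabilisers intersected with $N$.

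**The main obstacle** I expect is bookkeeping the four exceptional cases (i)--(iv) uniformly with the generic case $\tR'=\tR$: in cases (ii) and (iv) the quotient $\tN'/\tR'$ has an extra $\Sp_2(2)$ or $\mathrm{GO}_2^-(2)$ factor, and I must verify this factor acts trivially (modulo inner automorphisms of $C$) on the canonical characters in question, so that it does not create or merge orbits; while in cases (i) and (iii) one must confirm that passing from $\tR$ to the smaller $\tR'$ does not change the centraliser ($C_{\tG}(\tR')=C_{\tG}(R)$, already cited from \cite{FLZ20}) nor the set of blocks obtained. Once these compatibilities are in hand, the bijection is a formal consequence of Clifford theory for $C\unlhd\tC'$ together with the $N$-split property, and the equality $\kappa^{\tG}_G(\tB_s)=\kappa^{\tC'}_C(\ttheta')$ follows.
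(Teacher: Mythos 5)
Your overall architecture does match the paper's: parametrise the covered blocks by canonical characters $\theta\in\Irr(C\mid\ttheta'^{\tilde n})$, reduce to a single fixed $\ttheta'$ (the paper does this via $\tN'=\tR'\tC'N$ outside cases (ii), (iv), and via the inner action of $\tN'$ on $\tC'$ in those two cases), and then invoke Lemma~\ref{lem-split-centra} to rule out unwanted fusion; the case-by-case concerns you flag at the end are exactly the ones that have to be checked.

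However, the orbit bookkeeping in your middle step is wrong as stated, and taken literally it produces the wrong count. The set $X=\bigcup_{\ttheta''}\Irr(C\mid\ttheta'')$, with $\ttheta''$ running over the $\tN'$-orbit of $\ttheta'$, is a \emph{single} $\tN'$-orbit: $\tC'$ already acts transitively on $\Irr(C\mid\ttheta'')$ for each fixed $\ttheta''$ by Clifford theory, and $\tN'$ permutes the $\ttheta''$ transitively. So ``blocks correspond to $\tN'$-orbits on $X$'' would give exactly one covered block; likewise $\kappa^{\tC'}_C(\ttheta')$ is not ``the number of $\tC'$-orbits'' (which is $1$) but the number of constituents of $\Res^{\tC'}_C\ttheta'$, i.e.\ the length of that single $\tC'$-orbit. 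Distinct covered blocks correspond to distinct $N=N_G(R)$-classes of the pairs $(R,\theta)$, and $\tN'$-conjugate characters in general yield merely $\tG$-conjugate, not equal, blocks of $G$. The statement you actually need --- and the one Lemma~\ref{lem-split-centra} delivers, using that each element of $\tN'$ acts on $\tC'$ as a commuting product of an inner and a field automorphism --- is that two \emph{distinct} constituents of $\Res^{\tC'}_C\ttheta'$ are never $N$-conjugate; then the $N$-classes meeting $X$ are in bijection with $\Irr(C\mid\ttheta')$ itself, giving $\kappa^{\tG}_G(\tB_s)=|\Irr(C\mid\ttheta')|=\kappa^{\tC'}_C(\ttheta')$. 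Your final paragraph gestures at this (``every character of $C$ over a given character of $\tC'$ is $N$-split''), but the assertions that ``$N$-orbits and $\tN'$-orbits coincide'' or ``$\tN'$-orbits and $\tC'$-orbits coincide'' on $\Irr(C\mid\ttheta')$ are trivially true and beside the point: what matters is that the $N$-orbits there are singletons, which is the opposite extreme from transitivity.
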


\begin{proof}
As above, every $2$-block of $G$ covered by $\tB$ can be obtained through the above 	process from a suitable $\theta\in \Irr( C_{G}(R)/Z(R) \mid \ttheta'^{\tilde n})$ with $\tilde n\in N_{\tG}(\tR')$.		
If we are neither in the case (ii) nor (iv), then using \cite[Prop.~4.34 and 4.43]{FLZ20},
direct calculation case-by-case shows that $\tN'=\tR'\tC' N$, and then we may assume that $\tilde n\in N$.
If we are in the case (ii) or (iv), then $\tN'$ acts on $\tC'$ by inner automorphisms.
Hence if we fix a $\ttheta'$, then $\theta$ gives all 2-blocks of $G$ when  running through  $\Irr(C_{G}(R)/Z(R) \mid \ttheta')$.
From this it remains to prove that the characters in $\Irr(C_{G}(R)/Z(R) \mid \ttheta')$ are not $N$-conjugate.

By the above arguments, the group $\tC'=\prod_i (\tC_i')^{u_i}$ such that each component $\tC_i'$ is a general linear or unitary group and $\tN'=\prod_i \tN_i'\wr \fS_{u_i}$.
Furthermore, the action of any element of $\tN'$ on $\tC'$ (via conjugation) is a product of an inner automorphism and a field automorphism which commute.
Thus this assertion follows by Lemma~\ref{lem-split-centra}.
\end{proof}

\begin{thm}\label{num-block}
$\kappa^{\tG}_{G}(\tB_s)$ is the number of $z\in\rO_{2'}(\mathfrak Z)$ satisfying $z.\Gamma=\Gamma$ for every elementary divisor $\Gamma$ of $s$.
\end{thm}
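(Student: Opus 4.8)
The plan is to combine the previous lemma with an analysis of the restriction map $\Irr(C_G(R)/Z(R) \mid \ttheta') \to$ (blocks of $G$), reducing the count to the number of elements of $\fZ$ fixing all elementary divisors of $s$, after quotienting by the $2$-part which acts trivially. By Lemma~\ref{lem-block} we have $\kappa^{\tG}_{G}(\tB_s)=\kappa^{\tC'}_{C}(\ttheta')$, so it suffices to compute $\kappa^{\tC'}_{C}(\ttheta')$, i.e., the number of irreducible constituents of the restriction of $\ttheta'$ (a linear-like canonical character on a product of general linear/unitary groups) to the subgroup $C=C_G(R)$ cut out as the kernel of the determinant-type homomorphism $\iota:\tC'\to\fZ$. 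Since $\tC'/C$ is abelian and cyclic on each factor, Clifford theory gives that this number equals $|\{z\in\fZ \mid z.\ttheta'=\ttheta',\ o(z)\mid |\tC'/C|\}|$, in complete analogy with formula~(\ref{eq:branch-irr}) — this is the $\tC'$-analogue of the identity used for $\tchi_{s,\lambda}$ on $\tG$.

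Next I would translate the condition ``$z.\ttheta'=\ttheta'$'' into the polynomial language. The character $\ttheta'=\prod_\Gamma\prod_i \ttheta'_\Ga\otimes I_{2^{\beta_{\Ga,i}}}$, and $\ttheta'_\Ga$ is the character of a general linear/unitary group attached to the semisimple element whose unique elementary divisor is $\Ga_{(\alpha')}$ (with $\alpha'=0$ in the case $\tR'=\tR$ or cases (ii),(iv), and $\alpha'>0$ in cases (i),(iii)). The action of $z\in\fZ=Z(\tG)$ on $\ttheta'$ is, via Jordan decomposition, exactly multiplication of the associated semisimple element by $z$, hence multiplies each elementary divisor $\Ga_{(\alpha')}$ by $z$; so $z.\ttheta'=\ttheta'$ iff $z.\Ga_{(\alpha')}=\Ga_{(\alpha')}$ for every $\Ga$ occurring in $s$. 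Now Lemma~\ref{number-2-pow} says precisely that for $\Ga\in\cF'$ and $z$ of $2'$-order, $z.\Ga_{(\alpha')}=\Ga_{(\alpha')} \iff z.\Ga=\Ga$; and any $z$ of $2$-power order fixes every such $\Ga$ automatically (being a root of unity of $2$-power order, it lies in the kernel of the relevant torsion action since the elementary divisors of $s$ have odd order — concretely $z.\Ga=\Ga$ whenever $z^{d_\Ga/\gcd(d_\Ga,2^{\alpha'})}$ acts trivially, which for $2'$-order roots of $\Ga$ forces triviality). Therefore, writing $z=z_{2'}z_2$ and noting the $2$-part contributes a fixed factor equal to the full number of $2$-elements of $\fZ$ dividing $|\tC'/C|$, the count collapses to the number of $z\in\rO_{2'}(\fZ)$ with $z.\Ga=\Ga$ for all elementary divisors $\Ga$ of $s$.

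The step I expect to be the main obstacle is verifying cleanly that the divisibility constraint ``$o(z)\mid|\tC'/C|$'' does not further restrict the $2'$-part: one must check that $|\tC'/C|$ always contains the full $2'$-part of $|\fZ|=q-\eta$, which requires going through the explicit structure of $\tR'$ and $\tC'$ in the main case $\tR'=\tR$ as well as in each of the four degenerate cases (i)--(iv), using \cite[Prop.~4.34, 4.37, 4.43, 4.47]{FLZ20}; in each case $\tC'$ has a $\GL$- or $\GU$-factor of the form $\GL_{m}((\eta q)^{2^{\alpha}})$ mapping onto $\fZ$ via a norm-type determinant, so $|\tC'/C|$ is divisible by $q-\eta$, giving the claim. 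A secondary point to handle carefully is the $2$-power part of $z$ in cases (ii) and (iv), where $a=2$ forces $(q-\eta)_2$ to be small, but since by hypothesis $\Ga\in\cF'$ has odd-order roots, Lemma~\ref{number-2-pow} together with the observation that $2$-elements of $\fZ$ fix all such $\Ga$ settles it uniformly. Assembling these pieces yields $\kappa^{\tG}_G(\tB_s)=|\{z\in\rO_{2'}(\fZ)\mid z.\Ga=\Ga\text{ for every elementary divisor }\Ga\text{ of }s\}|$, as claimed.
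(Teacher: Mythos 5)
Your overall route coincides with the paper's: reduce to $\kappa^{\tC'}_{C}(\ttheta')$ via Lemma~\ref{lem-block}, count the relevant characters by Clifford theory as fixed points of $\mathfrak Z$, translate the condition $\hat z\ttheta'=\ttheta'$ into $z.\Ga_{(\al')}=\Ga_{(\al')}$ for the elementary divisors of $s$, and invoke Lemma~\ref{number-2-pow}. The point you flag about the $2'$-part of $|\tC'/C|$ is also the one the paper addresses, by citing $\rO_{2'}(\tC'/C)\cong\rO_{2'}(\tG/G)$ from \cite{FLZ20}.

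However, your treatment of the $2$-part of $z$ is incorrect, and incorrect in the direction that breaks the count. You assert that any $z\in\mathfrak Z$ of $2$-power order ``fixes every such $\Ga$ automatically.'' The opposite holds: if $\Ga\in\cF'$ and $\xi$ is a root of $\Ga$, then $o(\xi)$ is odd, so for a nontrivial $2$-element $z$ the element $z\xi$ has order $o(z)\,o(\xi)\neq o(\xi)$, whereas every root of $\Ga$, being of the form $\xi^{(\eta q)^k}$, has order exactly $o(\xi)$; hence $z.\Ga\neq\Ga$, and likewise $z.\Ga_{(\al)}\neq\Ga_{(\al)}$. Thus a nontrivial $2$-element of $\mathfrak Z$ never stabilizes $\ttheta'$. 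Under your (false) premise, the ``fixed factor equal to the full number of $2$-elements of $\fZ$ of order dividing $|\tC'/C|$'' would in general be strictly greater than $1$, and the count would then be that factor times the asserted answer; it does not ``collapse'' to the stated formula, and indeed your two sentences on this point contradict each other. The correct argument is that $\hat z\ttheta'=\ttheta'$ \emph{forces} the $2$-part of $z$ to be trivial --- this is precisely why $\rO_{2'}(\mathfrak Z)$ appears in the statement --- after which Lemma~\ref{number-2-pow} converts $z.\Ga_{(k_\Ga)}=\Ga_{(k_\Ga)}$ into $z.\Ga=\Ga$. With this correction your proof aligns with the paper's.
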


\begin{proof}
According to \cite[Lemma 4.35 and 4.45]{FLZ20}, $\rO_{2'}(\tC'/C)\cong \rO_{2'}(\tG/G)$.
By \cite[Lemma~2.1]{Feng19} and Lemma \ref{lem-block}, $\kappa^{\tG}_{G}(\tB_s)$ is the number of
$z\in\mathfrak Z$ satisfying $\hat z \ttheta'=\ttheta'$.
Then the proof is similar with \cite[Cor.~4.12]{Feng19}, and then $\hat z \ttheta'=\ttheta'$ if and only if $z.\Ga_{(k_\Ga)}=\Ga_{(k_\Ga)}$ for every elementary divisor $\Gamma$ of $s$, where $k_\Ga$ is an integer such that $0\le k_\Ga\le\al_\Ga$.
Thus by Lemma \ref{number-2-pow}, 	$\hat z \ttheta'=\ttheta'$  if and only if $z\in\rO_{2'}(\mathfrak Z)$ and $z.\Gamma=\Gamma$ for every elementary divisor $\Gamma$ of $s$.	
\end{proof}

\begin{rem}\label{blocksofslsu}
	Analogously with \cite[Remark~4.13]{Feng19},
	we give a description for the $2$-blocks of $G=\SL_n(\eta q)$ by summarizing the argument above.

	For $\sigma\in\overline{\mathbb F}_q^\times$, we denote by $[\sigma]$ the set of all roots of the polynomial in $\cF$ which has $\sigma$ as a root.
	Thus $[\sigma]$ is a single $F_{\eta q}$-orbit.
	Denote by $\mathrm{deg}(\sigma)$ the cardinality of $[\sigma]$.
	Then $\mathrm{deg}(\sigma)$ is the minimal integer $d$ such that $\sigma^{(\eta q)^d-1}=1$ 
	and $$[\sigma]=\{\ \sigma, \sigma^{\eta q}, \sigma^{(\eta q)^2},\ldots,  \sigma^{(\eta q)^{\mathrm{deg}(\sigma)-1}}    \ \}.$$

	We call an $a$-tuple 
$$
	(([\sigma_1], m_1),\dots,([\sigma_a], m_a))
$$
	of pairs
	an \emph{$(n,2)$-admissible block tuple},
	if
	\begin{itemize}
		\item for every $1\le i\le a$, $\sigma_i\in \overline{\mathbb F}_q^\times$ is a $2'$-element, and
		$m_i$ is a positive integer, 
		\item $[\sigma_i]\ne[\sigma_j]$ if $i\ne j$, and
		\item $\sum\limits^{a}_{i=1}m_i\mathrm{deg}(\sigma_i)=n$.
	\end{itemize}
	
	An equivalence class of an $(n,2)$-admissible block tuple
	$$
	(([\sigma_1], m_1),\dots,([\sigma_a], m_a))
	$$
	 up to a permutation of pairs $([\sigma_1], m_1), \ldots, ([\sigma_a], m_a)$ is called an \emph{$(n,2)$-admissible block symbol} and is denoted as 
	\begin{equation}\label{admissiblesyms}
	\mathfrak b=[([\sigma_1],  m_1),\dots,([\sigma_a], m_a)].
	\addtocounter{thm}{1}\tag{\thethm}
	\end{equation}
	
	The set of $(n,2)$-admissible block symbols is in bijection with conjugacy classes of semisimple $2'$-elements of $\tG=\GL_n(\eta q)$,
	and then is a labeling set for 2-blocks of $\tG$ by~\cite{Brou86}.
	Denote by $\tB_{\mathfrak b}$ the 2-block of $\tG$ corresponding to the $(n,2)$-admissible block symbol $\mathfrak b$ as in (\ref{admissiblesyms}).
	Now we define $\kappa(\mathfrak b)$ to be the cardinality of the set
	$\{~z\in\rO_{2'}(\mathfrak Z)\mid [z\sigma_i]= [\sigma_i]\ \text{for all}\ 1\le i\le a~ \}.$
	
	By Theorem \ref{num-block}, $\kappa^{\tG}_G(\tB_{\mathfrak b})=\kappa(\mathfrak b)$
	~(\emph{i.e.} the number of 2-blocks of $G$ covered by $\tB_{\mathfrak b}$ is $\kappa(\mathfrak b)$).
	For two $(n,2)$-admissible block symbols $\mathfrak b$ and $\mathfrak b'$, if they are in the same $\rO_{2'}(\mathfrak Z)$-orbit, then the sets of 2-blocks of $G$ covered by $\tB_{\mathfrak b}$ and $\tB_{\mathfrak b'}$ are the same.
	
	If moreover, we let $(B_{\mathfrak b})_1$, $(B_{\mathfrak b})_2,\dots, (B_{\mathfrak b})_{\kappa(\mathfrak b)}$ be
	the 2-blocks of $G$ covered by $\tB_{\mathfrak b}$,
	then	the set $\{(B_{\mathfrak b})_j\}$, where $\mathfrak b$ runs through the $\rO_{2'}(\mathfrak Z)$-orbit representatives of $(n,2)$-admissible block symbols and $j$ runs through the integers between $1$ and  $\kappa(\mathfrak b)$,
	is a complete set of $2$-blocks of $G$.	
\end{rem}

\begin{rem}
We mention that the case that $\ell$ is odd can be also obtained by the same method as above, which has been dealt with in 
\cite[Remark~4.13]{Feng19} via the $d$-Jordan-cuspidal pairs.
\end{rem}

\begin{rem}
In \cite[Remark~4.13]{Feng19}, the author gives an upper bound for the number of 2-blocks of $\SL_n(\eta q)$ covered by a given 2-block of $\GL_n(\eta q)$.
In fact, if $4\mid q-\eta$, then that upper bound is just the $\kappa(\mathfrak b)$ defined in Remark \ref{blocksofslsu}, while if $4\mid q+\eta$,  that upper bound may be bigger than $\kappa(\mathfrak b)$ above.
This provides an example to show that the assumption of odd primes is necessary in \cite[Thm.~A~(e)]{KM15}, which states that the $d$-Jordan-cuspidal pairs form a labeling set for the blocks of a finite groups of Lie type.
\end{rem}

\subsection*{Acknowledgement}
We thank Gunter Malle for useful comments on an earlier version.


\end{document}